\title{Gray-categories model algebraic tricategories}
\author{Giovanni Ferrer}
\begin{document}

\maketitle

\begin{abstract}
Lack described a Quillen model structure on the category $\GrayCat$ of $\Gray$-categories and $\Gray$-functors, for which the weak equivalences are the weak 3-equivalences. 
In this note, we adapt the technique of Gurski, Johnson, and Osorno to show the localization of $\GrayCat$ at the weak equivalences is equivalent to the category of algebraic tricategories and pseudo-natural equivalence classes of weak 3-functors.
\end{abstract}

\tableofcontents
\section{Introduction}

Let $\TwoCat$ be the category of strict 2-categories and strict 2-functors. 
The symmetric monoidal category $\Gray$ is the category $\TwoCat$ equipped with the Gray monoidal structure \cite{gurski_2013}. 
A $\Gray$-category is then a category enriched in $\Gray$ in the sense of \cite{kelly_2005}. 
We denote the 1-category of $\Gray$-categories and $\Gray$-functors by $\GrayCat$. A more explicit definition of $\Gray$-category can be found in Appendix \ref{appendix:Gray}.\smallskip

We now discuss the model category structure on $\GrayCat$ from \cite{lack_2011}. 

\begin{definition}
A $\Gray$-functor $F: \cA \to \cB$ between $\Gray$-categories is called:
\begin{itemize}
\item a \emph{weak equivalence} if it is a weak 3-equivalence of $\cA$ and $\cB$ as weak 3-categories,

\item a \emph{fibration} if it is a fibration on each hom-2-category and satisfies a so-called biadjoint-biequivalence-lifting property (from which one sees all $\Gray$-categories are fibrant),

\item in particular, a \emph{trivial fibration} if it is surjective at all levels and (fully) faithful at the top level, and
    
\item a \emph{cofibration} if it has the left lifting property against trivial fibrations. 
More specifically, $F: \cA \to \cB$ is a cofibration if and only if for every trivial fibration $F': \cA' \to \cB'$ and $\Gray$-functors $a: \cA \to \cA'$ and $b: \cB \to \cB'$ such that the following square commutes,
    
\begin{center}
\begin{tikzcd}
\cA \arrow[r,"a"] \arrow[d,"F"'] & \cA' \arrow[d,"F'"]\\
\cB \arrow[r,"b"'] \arrow[ur,"\ell" description, dashed] & \cB'
\end{tikzcd}
\end{center}
there exists a $\Gray$-functor lift $\ell:\cB \to \cA'$ such that the two triangles commute.
\end{itemize}
\end{definition}

\begin{definition}
We denote by $\cW$ the collection of weak equivalences in $\GrayCat$. The category $\GrayCat[\cW^{-1}]$ is the localization of $\GrayCat$ at the weak equivalences, which is determined (up to unique isomorphism) by the following universal property \cite{bauer_dugundji_1969}:
\begin{itemize}
    \item For any functor $F:\GrayCat \to \cD$ of categories which maps weak equivalences to isomorphisms, $F$ uniquely factors through $\GrayCat[\cW^{-1}]$, i.e., there exists a unique $\Phi: \GrayCat[\cW^{-1}] \to \cD$ such that the following diagram commutes on the nose:
\vspace{-10pt}
\begin{center}
\begin{tikzcd}[column sep = 4pt, row sep = 7pt]
\GrayCat \arrow[dd, "\pi"'] \arrow[rr, "F"] &    & \cD \\
                                                                                 & {} &     \\
{\GrayCat[\cW^{-1}]} \arrow[rruu,dashed, "\exists!\, \Phi"']                              &    &    
\end{tikzcd}
\end{center}
\end{itemize}
\end{definition}
\begin{remark}
In \cite[Appendix]{bauer_dugundji_1969}, an explicit construction is provided in which an object of $\GrayCat[\cW^{-1}]$ is simply a $\Gray$-category and a morphism from $\cA \to \cB$ in $\GrayCat[\cW^{-1}]$ is a ``zigzag'', i.e., a finite chain of $\Gray$-categories and $\Gray$-functors
$$\cA \xleftarrow{w_1} X_1 \xrightarrow{f_1} X_2 \xleftarrow{w_2} X_3 \xrightarrow{f_2} \cdots \xleftarrow{w_n} X_{2n-1} \xrightarrow{f_n} \cB$$
where the $\leftarrow$ and $\rightarrow$ alternate and each morphism $w_i$ ``in the wrong direction'' is a weak equivalence. These zigzags are considered equal up to a certain equivalence relation. We refer the reader to the previous citation for more details.
\end{remark}
\begin{definition}
We denote by $\hoTriCat$ the 1-category whose objects are $\Gray$-categories and morphisms are pseudo-natural equivalence classes of weak 3-functors. Since each weak 3-category (algebraic tricategory) is equivalent a $\Gray$-category \cite[\S10.4]{gurski_2013}, $\hoTriCat$ is equivalent to the 1-category of weak 3-categories and pseudo-natural equivalence classes of weak 3-functors.
\end{definition}
The main result of this note shows that Lack's model structure models algebraic tricategories in the following sense.
\begin{theorem*}[Theorem \ref{thm:main}]
The categories $\GrayCat[\cW^{-1}]$ and $\hoTriCat$ are isomorphic.
\end{theorem*}
The proof is a straightforward adaptation of \cite[Prop.~3.31]{gurski_johnson_osorno_2019} using cofibrant replacement and path objects.
\begin{acknowledgements}
The author would like to thank David Penneys for his guidance in this project and for many helpful conversations. The author would also like to thank Nick Gurski, Niles Johnson, and David Reutter for useful suggestions.
\end{acknowledgements}

\section{Constructions for \texorpdfstring{$\Gray$}{Gray}-categories}
In this section we provide the two necessary constructions needed to adapt the technique of \cite[Prop.~3.31]{gurski_johnson_osorno_2019} to show Theorem \ref{thm:main}.

\begin{prop}[Cofibrant replacement in $\GrayCat$] \label{fact:hat}
For every $\cC \in \GrayCat$, there is a cofibrant $\widehat{\cC} \in \GrayCat$ together with an ``evaluation'' $\Gray$-functor $\ev_{\cC}: \widehat{\cC} \to \cC$ which is a trivial fibration.
For every $\cA,\cB \in \GrayCat$ and weak 3-functor $F: \cA \to \cB$, there is a $\Gray$-functor $\widehat{F}: \widehat{\cA} \to \widehat{\cB}$ which satisfies the following properties:
\begin{enumerate}
\item[$(\widehat{1})$] For every weak 3-functor $F: \cA \to \cB$, the diagram \begin{tikzcd}
\widehat{\cA} \arrow[d, "\ev_{\cA}"'] \arrow[r, "\widehat{F}"] & \widehat{\cB} \arrow[d, "\ev_{\cB}"] \\
\cA \arrow[r, "F"']                                & \cB                           
\end{tikzcd} weakly commutes.

\item[$(\widehat{2})$] When $F: \cA \to \cB$ is a $\Gray$-functor, the diagram in $(\widehat{1})$ strictly commutes.

\item[$(\widehat{3})$] $\widehat{ \id_{\cA}} = \id_{\widehat{ \cA}}$.

\item[$(\widehat{4})$] If $F: \cA \to \cB$ and $G: \cB \to \cC$ are weak 3-functors, then $\widehat{ G \circ F} \cong \widehat{ G } \circ \widehat{ F }$.

\item[$(\widehat{5})$] In $(\widehat{4})$, if either $F$ or $G$ is a $\Gray$-functor, then $\widehat{ G \circ F} = \widehat{ G } \circ \widehat{ F }$.
\end{enumerate}
\end{prop}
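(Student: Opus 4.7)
The plan is to produce $\widehat{(-)}$ by combining Lack's functorial cofibrant replacement with the coherence theorem for tricategories.  Since Lack shows the model structure on $\GrayCat$ is cofibrantly generated, applying the small object argument to factor the map from the initial $\Gray$-category to $\cC$ as a cofibration followed by a trivial fibration produces the cofibrant $\widehat{\cC}$ together with the trivial fibration $\ev_\cC:\widehat{\cC}\to\cC$.  The small object argument is strictly functorial in $\Gray$-functors and preserves identities and composites on the nose, so restricting $\widehat{(-)}$ to the subcategory $\GrayCat\subseteq\{\text{weak 3-functors}\}$ immediately yields properties $(\widehat{2})$, $(\widehat{3})$, and the purely $\Gray$-functorial instance of $(\widehat{5})$.

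The heart of the proof is extending $\widehat{(-)}$ to weak 3-functors.  Given a weak 3-functor $F:\cA\to\cB$, the composite $F\circ\ev_\cA:\widehat{\cA}\to\cB$ is a weak 3-functor out of the cofibrant $\Gray$-category $\widehat{\cA}$.  Invoking the strictification/coherence theorem for tricategories \cite{gurski_2013} (applied on the cofibrant source), this weak 3-functor is pseudo-naturally equivalent to a genuine $\Gray$-functor $\widetilde{F}:\widehat{\cA}\to\cB$.  Because $\widehat{\cA}$ is cofibrant and $\ev_\cB$ is a trivial fibration, the lifting property produces a $\Gray$-functor $\widehat{F}:\widehat{\cA}\to\widehat{\cB}$ with $\ev_\cB\circ\widehat{F}=\widetilde{F}$, so
\[
\ev_\cB\circ\widehat{F}=\widetilde{F}\simeq F\circ\ev_\cA,
\]
establishing $(\widehat{1})$.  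The mixed case of $(\widehat{5})$ follows by the same procedure: if $G$ (say) is a $\Gray$-functor, then $\widehat{G}$ is the strict lift from the first paragraph, and the uniqueness of the lift together with strict functoriality forces the stated equality.

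For $(\widehat{4})$, I would argue that both $\widehat{G\circ F}$ and $\widehat{G}\circ\widehat{F}$ are $\Gray$-functors $\widehat{\cA}\to\widehat{\cC}$ whose post-composition with $\ev_\cC$ is pseudo-naturally equivalent to $G\circ F\circ\ev_\cA$.  A path-object argument in $\GrayCat$ (using that $\widehat{\cA}$ is cofibrant and that $\widehat{\cC}$ admits a path object $P\widehat{\cC}$ in Lack's model structure) then promotes the equivalence over $\ev_\cC$ to a pseudo-natural equivalence $\widehat{G\circ F}\cong\widehat{G}\circ\widehat{F}$.  The main obstacle is the strictification step: one must verify that the coherence theorem for tricategories produces, on a cofibrant $\Gray$-category, a $\Gray$-functor within the pseudo-natural equivalence class of any given weak 3-functor, and that these strictifications are compatible (up to pseudo-natural equivalence) with composition.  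Once this is in hand, the lifting and path-object arguments are routine applications of Lack's model structure.
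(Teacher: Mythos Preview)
Your approach diverges substantially from the paper's, and the divergence exposes a real gap.  The paper does \emph{not} use the small object argument at all: it takes $\widehat{\cC}=\Gr\cC$ to be Gurski's explicit $\Gr$ construction, and for a weak 3-functor $F$ it defines $\widehat{F}=\Gr F$ by an explicit formula built from the constraint cells $F^2,F^0$ of $F$.  Properties $(\widehat{1})$--$(\widehat{5})$ are then verified by direct inspection of those formulas (Appendix~\ref{appendix:Gr}).

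The central problem with your plan is the strictification step, which you yourself flag as ``the main obstacle.''  The statement you need---that every weak 3-functor out of a cofibrant $\Gray$-category is pseudo-naturally equivalent to a $\Gray$-functor---is not a theorem in \cite{gurski_2013}; it is precisely Corollary~\ref{cor:cofibweakfuncts} of \emph{this} paper, and its proof there invokes $(\widehat{1})$.  Gurski's coherence theorem for functors is the $\Gr$ construction itself: it produces a strict $\Gr F:\Gr\cA\to\Gr\cB$, not a strictification on an arbitrary cofibrant domain.  So your argument is circular unless you smuggle $\Gr$ back in (e.g.\ via a section $\widehat{\cA}\to\Gr\widehat{\cA}$), at which point you are essentially reproducing the paper's construction with extra steps.

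Even granting a strictification, your argument for the strict equality $(\widehat{5})$ does not go through.  Lifts along a trivial fibration are far from unique, so ``uniqueness of the lift'' cannot force $\widehat{G\circ F}=\widehat{G}\circ\widehat{F}$.  In the paper this equality holds because $\Gr F$ is built explicitly from the constraint data of $F$, and when one of $F,G$ is strict the constraint data of $G\circ F$ is literally that of the other factor transported along the strict one; the formulas then coincide on the nose.  An abstract ``choose a strictification, then choose a lift'' procedure has no mechanism to enforce such on-the-nose compatibility, and $(\widehat{5})$ is used essentially in the proof that $\Psi$ is well-defined.
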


\begin{proof}
Gurski's $\Gr$ construction in \cite[\S10.4]{gurski_2013} satisfies the properties required of $\widehat{\,\cdot\,}$.
We recall this construction and go over its desired properties in Appendix \ref{appendix:Gr}.
\end{proof}

\begin{prop}[Path objects exist in $\GrayCat$]\label{fact:BI}
For every $\cB \in \GrayCat$, there exists a ``path object'' $\cB^I \in \GrayCat$ such that:
\begin{enumerate}
    \item[$\mathrm{(P1)}$] There exist $\Gray$-functors $
        \begin{tikzcd}
\cB \arrow[r, "C" description] & \cB^I \arrow[r, "T" description, bend right] \arrow[r, "S" description, bend left] & \cB
\end{tikzcd}$
where $C$ is a weak equivalence and $S \circ C = T \circ C =\id_\cB$.

Here $\binom{S}{T}: \cB^I \to \cB \times \cB$ is a fibration, so that $\cB^I$ is a path object in the sense of \cite[Ch.~1, Def.~4]{quillen_1967}.

\item[$\mathrm{(P2)}$] If $F: \cB_1 \to \cB_2$ is a $\Gray$-functor, then there exists a $\Gray$-functor $F^I: \cB_1^I \to \cB_2^I$ which makes the corresponding $C$, $S$, and $T$ squares commute:
\vspace{-10pt}
\begin{center}
\begin{tikzcd}
\cB_1 \arrow[r, "C" description] \arrow[d, "F"'] & \cB_1^I \arrow[d, "F^I"'] \arrow[r, "S" description, bend left] \arrow[r, "T" description, bend right] & \cB_1 \arrow[d, "F"] \\
\cB_2 \arrow[r, "C" description]                 & \cB_2^I \arrow[r, "S" description, bend left] \arrow[r, "T" description, bend right]                   & \cB_2               
\end{tikzcd}  
\end{center}
    \item[$\mathrm{(P3)}$] Moreover, $\cB^I$ satisfies the property that for every two pseudo-naturally equivalent $\Gray$-functors $ F , G : \cA \to \cB$, there is a weak 3-functor $\langle F ,G \rangle: \cA \to \cB^I$ such that $S \circ \langle F , G  \rangle = F  $ and $T \circ \langle F  ,G  \rangle = G $.
\end{enumerate}
\end{prop}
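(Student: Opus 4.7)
The plan is to mirror the Gurski--Johnson--Osorno construction of path objects by replacing the walking adjoint equivalence $2$-category with a suitable walking adjoint biequivalence $\Gray$-category $\mathbf{I}_\bullet$, and defining $\cB^I$ as the $\Gray$-enriched functor category out of $\mathbf{I}_\bullet$. Concretely, I would take $\mathbf{I}_\bullet$ to be a cofibrant $\Gray$-category with two objects $0,1$ freely equipped with an internal adjoint biequivalence between them; one natural choice is $\mathbf{I}_\bullet := \widehat{\mathbf{D}}$, the cofibrant replacement (Proposition \ref{fact:hat}) of the codiscrete $\Gray$-category $\mathbf{D}$ on two objects. This produces canonical $\Gray$-functors $i_0, i_1 : \mathbf{1} \to \mathbf{I}_\bullet$ selecting the objects, and $p : \mathbf{I}_\bullet \to \mathbf{1}$, with $p \circ i_\varepsilon = \id_{\mathbf{1}}$ and $p$ a weak equivalence (since $\mathbf{D} \to \mathbf{1}$ is one).

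With this in hand, I would set $\cB^I := [\mathbf{I}_\bullet, \cB]$ (the enriched functor $\Gray$-category) and define $S := [i_0, \cB]$, $T := [i_1, \cB]$, $C := [p, \cB]$. The relations $p \circ i_\varepsilon = \id_{\mathbf{1}}$ immediately give the strict identities $S \circ C = T \circ C = \id_\cB$ required by (P1). To finish (P1), $C$ is a weak equivalence by $2$-out-of-$3$ once one notes that $p$ is, using cofibrancy of $\mathbf{I}_\bullet$ and fibrancy of $\cB$. For the fibration statement, I would show that the boundary inclusion $(i_0,i_1): \mathbf{1} \sqcup \mathbf{1} \hookrightarrow \mathbf{I}_\bullet$ is a cofibration in Lack's model structure (it is the inclusion of the generating objects into the free $\Gray$-category generated by an adjoint biequivalence), then apply the pushout--product axiom to deduce that the induced map $[\mathbf{I}_\bullet, \cB] \to [\mathbf{1} \sqcup \mathbf{1}, \cB] \cong \cB \times \cB$ is a fibration. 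Property (P2) is then automatic: $F^I := [\mathbf{I}_\bullet, F]$ is functorial in $F$ and compatible with precomposition by $i_0, i_1, p$ by construction.

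The main obstacle will be (P3), which requires repackaging the coherence data of a pseudo-natural equivalence $\alpha : F \simeq G$ between $\Gray$-functors $F, G : \cA \to \cB$ as an honest weak $3$-functor $\langle F,G \rangle: \cA \to [\mathbf{I}_\bullet, \cB]$ whose composites with $S$ and $T$ recover $F$ and $G$ on the nose. The strategy is to exploit the universal property built into $\mathbf{I}_\bullet$: weak $3$-functors $\mathbf{I}_\bullet \to \cC$ classify adjoint biequivalences in $\cC$, and by a tensor--hom adjunction this identifies weak $3$-functors $\cA \to [\mathbf{I}_\bullet, \cB]$ with pseudo-natural adjoint biequivalences between $\Gray$-functors $\cA \to \cB$. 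Given $\alpha$, one first upgrades it to a pseudo-natural \emph{adjoint} biequivalence by the standard coherence argument (choose a quasi-inverse, then correct the unit, counit, and swallowtail modifications), and then transposes the result under the adjunction to obtain $\langle F,G\rangle$. The technical heart of the argument is this coherent promotion of an equivalence to an adjoint biequivalence in a way that is compatible with all the hom-$2$-category data of $\cA$; this is the direct analogue of \cite[\S2]{gurski_johnson_osorno_2019} one dimension higher, and reduces to the contractibility of the space of adjoint-biequivalence structures extending a given biequivalence in a $\Gray$-category.
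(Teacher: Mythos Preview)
Your approach is genuinely different from the paper's. The paper takes $\cB^I := \bbP\cB$ to be Lack's explicit path space (objects are biequivalences in $\cB$, $1$-morphisms are squares filled by an equivalence, and so on) and verifies (P1)--(P3) by hand: $C$ is shown to be a weak equivalence by a direct fully-faithful-and-essentially-surjective argument, $F^I := \bbP F$ is defined by pushing all the tuple data through $F$, and $\langle F,G\rangle$ is built by packaging the components $(\alpha_a,\alpha_f,\alpha_\theta,\Pi,M)$ of a chosen pseudo-natural equivalence $\alpha:F\simeq G$ into objects and morphisms of $\bbP\cB$, with the weak $3$-functor coherence cells $(\chi,\iota,\omega,\gamma,\delta)$ supplied by $\Pi$ and $M$.

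Your representable approach via $[\mathbf{I}_\bullet,\cB]$ is more conceptual, but it rests on two pieces of infrastructure that you assume rather than establish, and neither is available off the shelf.

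\textbf{The monoidal model hypothesis.} You invoke the pushout--product axiom to deduce that $\binom{S}{T}$ is a fibration, and you implicitly need that $[-,\cB]$ sends weak equivalences between cofibrant $\Gray$-categories to weak equivalences in order to conclude that $C=[p,\cB]$ is one. Lack's model structure on $\GrayCat$ is not shown to be monoidal for any closed tensor making $[\mathbf{I}_\bullet,-]$ a right adjoint, and this is precisely why Lack constructs $\bbP\cB$ explicitly and checks the fibration and weak-equivalence conditions by hand rather than by abstract nonsense. Without this, both halves of (P1) are unjustified.

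\textbf{The (P3) adjunction.} The enriched tensor--hom adjunction for $\Gray$-categories, insofar as it exists, is a bijection at the level of $\Gray$-\emph{functors}: it matches $\Gray$-functors $\cA\to[\mathbf{I}_\bullet,\cB]$ with $\Gray$-functors out of a tensor $\cA\otimes\mathbf{I}_\bullet$. It says nothing about \emph{weak} $3$-functors into $[\mathbf{I}_\bullet,\cB]$, which is what (P3) demands, and there is no standard transposition that carries a pseudo-natural equivalence to such a weak $3$-functor with $S\circ\langle F,G\rangle=F$ and $T\circ\langle F,G\rangle=G$ holding strictly. Producing that weak $3$-functor is exactly the content of (P3); the paper does it by explicit construction, and your proposal does not supply an alternative mechanism.
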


\begin{proof}
Lack's path space construction $\bbP\cB$ for $\cB \in \GrayCat$ in \cite[Prop.~4.1]{lack_2011} satisfies the desired properties. As \cite{gurski_johnson_osorno_2019} note in their Remark 3.33, there have been some mistakes in the literature regarding path objects and transferred model structures. 
We provide a careful treatment of this by recalling Lack's construction and showing it satisfies the properties of $\cB^I$ in Appendix \ref{appendix:PB}.
\end{proof}

\begin{remark}
We note that the existence of such path objects in $\GrayCat$ satisfying \hyperref[fact:BI]{(P3)} is crucial in proving our main result, as it relates (right) homotopies in the model theoretic sense with pseudo-natural equivalences. 
\end{remark}

\section{The main theorem}
In this section we implement the technique of \cite{gurski_johnson_osorno_2019}. There is an obvious functor $Q: \GrayCat\to \hoTriCat$ which maps each $\Gray$-category to itself and maps $F: \cA \to \cB$ to its equivalence class $[F]$ in $\hoTriCat$.
Clearly this functor maps equivalences to isomorphisms, and thus uniquely factors through $\GrayCat[\cW^{-1}]$.
We will denote this factorization by $\Phi: \GrayCat[\cW^{-1}] \to \hoTriCat$.\medskip

\begin{definition}
Define $\Psi: \hoTriCat \to \GrayCat[\cW^{-1}]$ to be the identity on objects and, for a weak 3-functor $F: \cA \to \cB$, define
$$\Psi([F]) \coloneqq \cA \xleftarrow{\ev_{\cA}} \widehat{\cA} \xrightarrow{\widehat{F}} \widehat{\cB} \xrightarrow{\ev_{\cB}} \cB.$$
\end{definition}

\begin{lemma}
$\Psi$ is a well-defined functor.
\end{lemma}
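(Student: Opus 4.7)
The plan is to verify two things: (a) the zigzag $\Psi([F])$ does not depend on the chosen representative $F$, and (b) $\Psi$ preserves identities and composition. Preservation of identities and the cancellation of adjacent $\ev$'s will be mechanical; the real content is in step (a)—showing that pseudo-naturally equivalent weak 3-functors yield the same morphism in $\GrayCat[\cW^{-1}]$.

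For well-definedness, suppose $F, F': \cA \to \cB$ are pseudo-naturally equivalent weak 3-functors. Since the outer $\ev_\cA$ and $\ev_\cB$ legs of the zigzag are common to both representatives, it suffices to show $[\widehat{F}] = [\widehat{F'}]$ in the localization. Using that Gurski's $\Gr$ construction respects pseudo-natural equivalences (a functoriality property of the hat, detailed in Appendix \ref{appendix:Gr}), $\widehat{F} \simeq \widehat{F'}$ as $\Gray$-functors, so property $(\mathrm{P3})$ yields a weak 3-functor $H \coloneqq \langle \widehat{F}, \widehat{F'}\rangle : \widehat{\cA} \to \widehat{\cB}^I$ with $S \circ H = \widehat{F}$ and $T \circ H = \widehat{F'}$. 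Applying $\widehat{\,\cdot\,}$ and invoking $(\widehat{5})$ (since $S, T$ are $\Gray$-functors) gives $\widehat{S} \circ \widehat{H} = \widehat{\widehat{F}}$ and $\widehat{T} \circ \widehat{H} = \widehat{\widehat{F'}}$. The crux is now to establish $[\widehat{S}] = [\widehat{T}]$ in $\GrayCat[\cW^{-1}]$: from $S \circ C = T \circ C = \id_{\widehat{\cB}}$ together with $(\widehat{3})$ and $(\widehat{5})$, I obtain $\widehat{S} \circ \widehat{C} = \widehat{T} \circ \widehat{C} = \id$, while $\widehat{C}$ is itself a weak equivalence by 2-of-3 applied to the strictly commuting $(\widehat{2})$ square for $C$ (whose other edges $C$, $\ev_{\cB^I}$, $\ev_\cB$ are all weak equivalences). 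Hence $[\widehat{C}]$ is invertible and $[\widehat{S}] = [\widehat{T}] = [\widehat{C}]^{-1}$, forcing $[\widehat{\widehat{F}}] = [\widehat{\widehat{F'}}]$. Finally, the $(\widehat{2})$ squares for the $\Gray$-functors $\widehat{F}$ and $\widehat{F'}$ express $[\widehat{F}]$ and $[\widehat{F'}]$ in terms of $[\widehat{\widehat{F}}]$, $[\widehat{\widehat{F'}}]$, and the $\ev$'s, yielding $[\widehat{F}] = [\widehat{F'}]$ and hence $\Psi([F]) = \Psi([F'])$.

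For functoriality, preservation of identities is immediate from $(\widehat{3})$: the zigzag $\cA \xleftarrow{\ev_\cA} \widehat{\cA} \xrightarrow{\id} \widehat{\cA} \xrightarrow{\ev_\cA} \cA$ collapses to $\id_\cA$ in $\GrayCat[\cW^{-1}]$. For composition, given $F: \cA \to \cB$ and $G: \cB \to \cC$, the concatenated zigzag $\Psi([G]) \circ \Psi([F])$ contains an inner $[\ev_\cB]^{-1} \circ [\ev_\cB] = \id$ which cancels, leaving $[\ev_\cC] \circ [\widehat{G} \circ \widehat{F}] \circ [\ev_\cA]^{-1}$; meanwhile $\Psi([G \circ F]) = [\ev_\cC] \circ [\widehat{G \circ F}] \circ [\ev_\cA]^{-1}$. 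The pseudo-natural equivalence $\widehat{G \circ F} \cong \widehat{G} \circ \widehat{F}$ from $(\widehat{4})$, combined with the well-definedness argument (which specializes cleanly here because both functors are already $\Gray$-functors, bypassing the ``hat preserves equivalences'' step), gives $[\widehat{G \circ F}] = [\widehat{G} \circ \widehat{F}]$, completing $\Psi([G] \circ [F]) = \Psi([G \circ F]) = \Psi([G]) \circ \Psi([F])$.

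The main obstacle lies inside the well-definedness step, and specifically in the single equality $[\widehat{S}] = [\widehat{T}]$. This is what welds the path-object and hat constructions together and converts a pseudo-natural equivalence into a bona fide equality in the localization; everything else is either zigzag bookkeeping or a direct invocation of properties $(\widehat{1})$–$(\widehat{5})$ and $(\mathrm{P1})$–$(\mathrm{P3})$.
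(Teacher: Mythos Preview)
Your argument is correct and shares the paper's core mechanism---use (P3) to produce a map into a path object, hat it, then exploit $S\circ C=T\circ C$ from (P1) to interchange $S$ and $T$ in the localization---but you execute it one hat-level higher than the paper does. The paper applies (P3) directly to $F,G$ to obtain $\langle F,G\rangle:\cA\to\cB^I$, hats once, and performs the $S/T$ swap by inserting the identity $[C]^{-1}[C]$ and invoking (P1) at the level of $\cB^I$. You instead first pass to $\widehat{F},\widehat{F'}$, apply (P3) to these (genuine $\Gray$-functors) to land in $\widehat{\cB}^I$, hat a second time, and establish $[\widehat{S}]=[\widehat{T}]$ before descending via $(\widehat{2})$. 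Your route has the virtue of matching the literal hypothesis of (P3), which is stated for $\Gray$-functors; the paper's more direct route tacitly uses the extension of (P3) to weak 3-functors (which the appendix construction does in fact support). The cost is your auxiliary claim that $\widehat{F}\simeq\widehat{F'}$ whenever $F\simeq F'$: contrary to your citation, this is not among the properties recorded in Appendix~\ref{appendix:Gr} or listed as $(\widehat{1})$--$(\widehat{5})$. It is, however, an easy consequence of $(\widehat{1})$ together with the fact that $\ev_{\cB}$, being a biequivalence, can be cancelled on the left up to pseudo-natural equivalence; you should make that derivation explicit rather than pointing to the appendix.

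For composition your argument coincides with the paper's: both invoke $(\widehat{4})$ plus the already-established well-definedness to obtain $\Psi([\widehat{G}\circ\widehat{F}])=\Psi([\widehat{G\circ F}])$, then collapse the resulting double-hat zigzags via $(\widehat{2})$ to conclude $[\widehat{G}\circ\widehat{F}]=[\widehat{G\circ F}]$ in $\GrayCat[\cW^{-1}]$.
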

\begin{proof}
Suppose $F,G: \cA \to \cB$ are pseudo-naturally equivalent. 
Then
\begin{align*}
\Psi([F]) 
&= \cA \xleftarrow{\ev_{\cA}} \widehat{\cA} \xrightarrow{\widehat{F}} \widehat{\cB} \xrightarrow{\ev_{\cB}} \cB\\
&= \cA \xleftarrow{\ev_{\cA}} \widehat{\cA} \xrightarrow{\widehat{S \circ \langle F,G \rangle}} \widehat{\cB} \xrightarrow{\ev_{\cB}} \cB \tag*{\hyperref[fact:BI]{(P3)}}\\
&= \cA \xleftarrow{\ev_{\cA}} \widehat{\cA} \xrightarrow{\widehat{ \langle F,G \rangle}} \widehat{\cB^I} \xrightarrow{\widehat{S}} \widehat{\cB} \xrightarrow{\ev_{\cB}} \cB \tag*{\hyperref[fact:hat]{($\widehat{5}$)}}\\
&= \cA \xleftarrow{\ev_{\cA}} \widehat{\cA} \xrightarrow{\widehat{ \langle F,G \rangle}} \widehat{\cB^I} \xrightarrow{\ev_{\cB^I}} \cB^I \xrightarrow{S} \cB \tag*{\hyperref[fact:hat]{($\widehat{2}$)}}\\
&= \cA \xleftarrow{\ev_{\cA}} \widehat{\cA} \xrightarrow{\widehat{ \langle F,G \rangle}} \widehat{\cB^I} \xrightarrow{\ev_{\cB^I}} \cB^I \xleftarrow{C} \cB \xrightarrow{C} \cB^I \xrightarrow{S} \cB\\
&= \cA \xleftarrow{\ev_{\cA}} \widehat{\cA} \xrightarrow{\widehat{ \langle F,G \rangle}} \widehat{\cB^I} \xrightarrow{\ev_{\cB^I}} \cB^I \xleftarrow{C} \cB \xrightarrow{C} \cB^I \xrightarrow{T} \cB \tag*{\hyperref[fact:BI]{(P1)}}\\
&= \cA \xleftarrow{\ev_{\cA}} \widehat{\cA} \xrightarrow{\widehat{ \langle F,G \rangle}} \widehat{\cB^I} \xrightarrow{\ev_{\cB^I}} \cB^I \xrightarrow{T} \cB \\
&= \cA \xleftarrow{\ev_{\cA}} \widehat{\cA} \xrightarrow{\widehat{ \langle F,G \rangle}} \widehat{\cB^I} \xrightarrow{\widehat{T}} \widehat{\cB} \xrightarrow{\ev_{\cB}} \cB \tag*{\hyperref[fact:hat]{($\widehat{2}$)}}\\
&= \cA \xleftarrow{\ev_{\cA}} \widehat{\cA} \xrightarrow{\widehat{T \circ \langle F,G \rangle}} \widehat{\cB} \xrightarrow{\ev_{\cB}} \cB \tag*{\hyperref[fact:hat]{($\widehat{5}$)}}\\
&= \cA \xleftarrow{\ev_{\cA}} \widehat{\cA} \xrightarrow{\widehat{G}} \widehat{\cB} \xrightarrow{\ev_{\cB}} \cB  = 
\Psi([G]). \tag*{\hyperref[fact:BI]{(P3)}}
\end{align*}
Hence $\Psi$ is well-defined. Observe
\begin{align*}
\Psi([\id_\cA]) 
&= \cA \xleftarrow{\ev_{
\cA}} \widehat{\cA} \xrightarrow{\widehat{\id_{\cA}}} \widehat{\cA} \xrightarrow{\ev_{\cA}} \cA \\
&= \cA \xleftarrow{\ev_{
\cA}} \widehat{\cA} \xrightarrow{\id_{\widehat{{\cA}}}} \widehat{\cA} \xrightarrow{\ev_{\cA}} \cA \tag*{\hyperref[fact:hat]{($\widehat{3}$)}}\\
&= \cA \xleftarrow{\ev_{
\cA}} \widehat{\cA} \xrightarrow{\ev_{\cA}} \cA\\
&= \cA \xrightarrow{\id_{\cA}} \cA = \id_{\Psi(\cA)}.
\end{align*}
Since $\Psi$ is well-defined, for composable weak 3-functors $\cA \xrightarrow{F} \cB \xrightarrow{G} \cC$,
\begin{align*}
\widehat{\cA} \xrightarrow{\widehat{F}} \widehat{\cB} \xrightarrow{\widehat{G}} \widehat{\cC} \underset{\hyperref[fact:hat]{(\widehat{2})}}{=} 
\widehat{\cA} \xleftarrow{\ev_{\widehat{\cA}}} \,\,\widehat{\!\!\widehat{{\cA}}} \xrightarrow{\widehat{\widehat{G} \circ \widehat{F}}} \,\,\widehat{\!\!\widehat{{\cC}}} \xrightarrow{\ev_{\widehat{\cC}}} \widehat{\cC} 
&= \Psi([\widehat{G} \circ\widehat{F}]) \tag*{\hyperref[fact:hat]{($\widehat{4}$)}}\\[-10pt]
&= \Psi([\widehat{G \circ F}]) =
\widehat{\cA} \xleftarrow{\ev_{\widehat{\cA}}} \,\,\widehat{\!\!\widehat{{\cA}}} \xrightarrow{\widehat{\widehat{G \circ F}}} \,\,\widehat{\!\!\widehat{{\cC}}} \xrightarrow{\ev_{\widehat{\cC}}} \widehat{\cC}
\underset{\hyperref[fact:hat]{(\widehat{2})}}{=} 
\widehat{\cA} \xrightarrow{\widehat{G \circ F}} \widehat{\cC}.
\end{align*}
\vspace{-7pt} Hence
\begin{align*}
\Psi([G]) \circ \Psi([F]) 
&= \cA \xleftarrow{\ev_{\cA}} \widehat{\cA} \xrightarrow{\widehat{F}} \widehat{\cB} \xrightarrow{\ev_{\cB}} \cB \xleftarrow{\ev_{\cB}} \widehat{\cB} \xrightarrow{\widehat{G}} \widehat{\cC} \xrightarrow{\ev_{\cC}} \cC\\
&= \cA \xleftarrow{\ev_{\cA}} \widehat{\cA} \xrightarrow{\widehat{F}} \widehat{\cB} \xrightarrow{\widehat{G}} \widehat{\cC} \xrightarrow{\ev_{\cC}} \cC \\    
&= \cA \xleftarrow{\ev_{\cA}} \widehat{\cA} \xrightarrow{\widehat{G \circ F}} \widehat{\cC} \xrightarrow{\ev_{\cC}} \cC = 
\Psi([G 
\circ F]),\\  
\end{align*}
and we conclude $\Psi$ is a functor.
\end{proof}

We are now ready to prove the main result of this note. 

\begin{theorem}\label{thm:main}
The functors $\Phi$ and $\Psi$ exhibit an isomorphism of categories.
\end{theorem}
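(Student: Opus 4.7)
Since both $\Phi$ and $\Psi$ are identity on objects, the content of the theorem is that the two composites act as the identity on morphisms. I would split this into the two directions and reduce each to a computation that invokes exactly one of the hat-properties together with the universal property of the localization.

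\textbf{Direction 1: $\Phi \circ \Psi = \id_{\hoTriCat}$.} Given a weak 3-functor $F : \cA \to \cB$, unpack $\Phi$ on the zigzag $\Psi([F])$ using the explicit description in the remark following the definition of $\GrayCat[\cW^{-1}]$; since $\Phi \circ \pi = Q$, applying $\Phi$ to the zigzag yields
\[ \Phi(\Psi([F])) = [\ev_{\cB}] \circ [\widehat{F}] \circ [\ev_{\cA}]^{-1} \]
in $\hoTriCat$, where $[\ev_{\cA}]^{-1}$ exists because a trivial fibration is a weak equivalence, hence a biequivalence, hence has a pseudo-inverse, which becomes an honest inverse after passing to pseudo-natural equivalence classes. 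Now invoke property $(\widehat{1})$: the weak commutativity of the evaluation square gives $[F] \circ [\ev_{\cA}] = [\ev_{\cB}] \circ [\widehat{F}]$, so the right-hand side collapses to $[F]$.

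\textbf{Direction 2: $\Psi \circ \Phi = \id_{\GrayCat[\cW^{-1}]}$.} Here I would appeal to the universal property of the localization rather than argue zigzag by zigzag. Since $\id_{\GrayCat[\cW^{-1}]}$ is the unique functor satisfying $(-) \circ \pi = \pi$, it suffices to verify $\Psi \circ \Phi \circ \pi = \pi$, i.e.\ $\Psi \circ Q = \pi$ on $\GrayCat$. Concretely, for a $\Gray$-functor $F : \cA \to \cB$ one must show the zigzag $\Psi([F])$ equals $\pi(F)$. This is where property $(\widehat{2})$ does the work: because $F$ is a genuine $\Gray$-functor, the evaluation square commutes strictly, so in $\GrayCat[\cW^{-1}]$ we obtain $\pi(\ev_{\cB}) \circ \pi(\widehat{F}) = \pi(F) \circ \pi(\ev_{\cA})$, and post-composing with $\pi(\ev_{\cA})^{-1}$ (legitimate because $\ev_{\cA} \in \cW$) gives exactly $\Psi([F]) = \pi(F)$.

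\textbf{Expected obstacle.} Neither direction requires a long calculation once the correct reduction is in place; the only delicate point is the well-definedness of $\Phi$ on zigzags and the justification that weak equivalences become invertible in $\hoTriCat$. The latter rests on the fact that every weak 3-equivalence between $\Gray$-categories admits a pseudo-inverse weak 3-functor, which follows from the biequivalence characterization underlying the definition of weak equivalence in Lack's model structure. Once this is noted, the proof is essentially the two one-line manipulations above, contrasting the use of $(\widehat{1})$ (weak square, suffices in $\hoTriCat$) with $(\widehat{2})$ (strict square, needed in $\GrayCat[\cW^{-1}]$).
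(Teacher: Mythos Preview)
Your proposal is correct and follows essentially the same approach as the paper. The only cosmetic difference is that the paper phrases Direction 1 as ``$\Phi$ is surjective on hom-sets by $(\widehat{1})$'' and then combines this with $\Psi\circ\Phi=\id$ (your Direction 2, argued identically via $(\widehat{2})$ and the universal property) to conclude bijectivity, whereas you verify $\Phi\circ\Psi=\id$ directly; the underlying computation is the same in both cases.
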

\begin{proof}
Since both functors are the identity on objects, it suffices to show that $\Phi$ and $\Psi$ are mutual inverses on hom sets.
The fact that $\Phi$ is surjective on hom sets follows from \hyperref[fact:hat]{($\widehat{1}$)}.
We now show $(\Psi \circ \Phi)([F]) = [F]$ and conclude $\Phi$ is an isomorphism with inverse $\Psi$. 
Indeed, consider the following diagram:

\begin{center}
\begin{tikzcd}[column sep = 5pt, row sep = 15pt]
\GrayCat \arrow[dd, "\pi"'] \arrow[rr, "Q"] &    & \hoTriCat \arrow[dd, "\Psi"] \\
                                                                                 & {} &                              \\
{\GrayCat[\cW^{-1}]} \arrow[rr, "\id"'] \arrow[rruu, "\Phi" description]         &    & {\GrayCat[\cW^{-1}]}        
\end{tikzcd}
\end{center}
whose left triangle commutes by the universal property of $\GrayCat[\cW^{-1}]$. 
Note that for every $\Gray$-functor $F: \cA \to \cB$,
\begin{equation*}
(\Psi \circ Q)(F) = \Psi([F]) = A \xleftarrow{\ev_{\cA}} \widehat{\cA} \xrightarrow{\widehat{F}} \widehat{\cB} \xrightarrow{\ev_{\cB}} \cB \underset{\hyperref[fact:hat]{(\widehat{2})}}{=} \cA \xleftarrow{\ev_{\cA}} \cA \xrightarrow{\ev_{\cA}} \cA \xrightarrow{F} \cB = \cA \xrightarrow{F} \cB = \pi(F). 
\end{equation*}
Thus $\Psi \circ \Phi \circ \pi  = \Psi \circ Q = \pi$. By the uniqueness of factorizations through $\GrayCat[\cW^{-1}]$, it is easy to see that $\pi$ is epic.
From this we conclude $\Psi \circ \Phi = \id$.
\end{proof}

\section{Corollaries}

We obtain the following two corollaries which correspond to the surjectivity and injectivity/well-definedness respectively of the above bijection induced by $\Phi$.

\begin{corollary}\label{cor:cofibweakfuncts}
If $\cA$ is a cofibrant $\Gray$-category and $F: \cA \to \cB$ is a weak 3-functor, then $F$ is pseudonaturally equivalent to a $\Gray$-functor.
\end{corollary}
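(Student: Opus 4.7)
The plan is to use the cofibrancy of $\cA$ to obtain a $\Gray$-functor section of the trivial fibration $\ev_\cA : \widehat{\cA} \to \cA$, and then to transport $F$ through the weakly commutative square from property $(\widehat{1})$ to produce a $\Gray$-functor pseudonaturally equivalent to $F$.

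First, I would use that $\cA$ being cofibrant means the unique map $\emptyset \to \cA$ has the left lifting property against every trivial fibration. Applied to the trivial fibration $\ev_\cA : \widehat{\cA} \to \cA$, together with the identity $\id_\cA : \cA \to \cA$ viewed as a lifting problem
$$\begin{tikzcd} \emptyset \arrow[r] \arrow[d] & \widehat{\cA} \arrow[d, "\ev_\cA"] \\ \cA \arrow[r, "\id_\cA"'] \arrow[ur, dashed, "s"] & \cA, \end{tikzcd}$$
this yields a $\Gray$-functor $s : \cA \to \widehat{\cA}$ satisfying $\ev_\cA \circ s = \id_\cA$.

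I would then set $G \coloneqq \ev_\cB \circ \widehat{F} \circ s : \cA \to \cB$, which is a composite of $\Gray$-functors and hence itself a $\Gray$-functor. Property $(\widehat{1})$ supplies a pseudonatural equivalence $\ev_\cB \circ \widehat{F} \simeq F \circ \ev_\cA$, and whiskering on the right by the $\Gray$-functor $s$ gives
$$G = \ev_\cB \circ \widehat{F} \circ s \;\simeq\; F \circ \ev_\cA \circ s = F \circ \id_\cA = F,$$
exhibiting $F$ as pseudonaturally equivalent to the $\Gray$-functor $G$, as required.

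The only delicate point will be verifying that whiskering a pseudonatural equivalence between weak 3-functors by a (strict) $\Gray$-functor again yields a pseudonatural equivalence between the whiskered composites; this is a routine unpacking of the definition of a pseudonatural transformation of weak 3-functors and should pose no serious obstacle. In effect, the entire content of the corollary reduces to the single lifting provided by cofibrancy, combined with an invocation of property $(\widehat{1})$.
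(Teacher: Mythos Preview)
Your proof is correct and matches the paper's approach essentially line for line: the paper likewise lifts $\id_\cA$ against the trivial fibration $\ev_\cA$ to obtain a section $\ell$, then invokes $(\widehat{1})$ to conclude $F = F \circ \ev_\cA \circ \ell \cong \ev_\cB \circ \widehat{F} \circ \ell$. The only difference is cosmetic (your $s$ is their $\ell$), and your explicit remark about whiskering the pseudonatural equivalence is a detail the paper leaves implicit.
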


\begin{proof}
Since $\ev_{\cA}: \widehat{\cA} \to \cA$ is a trivial fibration, if $\varnothing \to \cA$ is a cofibration, then there exists a $\Gray$-functor lift $\ell: \cA \to \widehat{\cA}$ which makes the following triangles commute
\begin{center}
\begin{tikzcd}
\varnothing \arrow[d, "!"'] \arrow[r, "!"]                   & \widehat{\cA} \arrow[d, "\ev_{\cA}"] \\
\cA \arrow[r, "\id"'] \arrow[ru, "\ell" description, dotted] & \cA                                 
\end{tikzcd}
\end{center}
Thus $\ell$ is a section of $\ev_\cA$ and $
F = F \circ \ev_\cA \circ \, \ell \underset{\hyperref[fact:hat]{(\widehat{1})}}{\cong} \ev_{\cB} \circ \widehat{F} \circ \ell \text{, a $\Gray$-functor}.$
\end{proof}

\begin{corollary}
Suppose $\cA$ is a cofibrant $\Gray$-category and $F,G: \cA \to \cB$ are $\Gray$-functors. Then $F$ is pseudonaturally equivalent to $G$ if and only if $F$ and $G$ are homotopic in Lack's model structure.
\end{corollary}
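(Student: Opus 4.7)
The plan is to prove the two implications separately. The forward direction, which is the substantive one, will combine the path-object pairing \hyperref[fact:BI]{(P3)} with the cofibrant replacement $\widehat{\,\cdot\,}$ to construct a genuine $\Gray$-functor right homotopy. The backward direction will then follow formally from Theorem \ref{thm:main}.

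For $(\Rightarrow)$, suppose $F$ and $G$ are pseudonaturally equivalent. By \hyperref[fact:BI]{(P3)} I obtain a weak 3-functor $\langle F,G\rangle:\cA \to \cB^I$ with $S\circ \langle F,G\rangle = F$ and $T\circ \langle F,G\rangle = G$. Since $\cA$ is cofibrant, lifting $\varnothing \to \cA$ against the trivial fibration $\ev_\cA$ produces a $\Gray$-functor section $\ell:\cA\to\widehat{\cA}$ of $\ev_\cA$, exactly as in the proof of Corollary \ref{cor:cofibweakfuncts}. I would then propose the $\Gray$-functor $H := \ev_{\cB^I} \circ \widehat{\langle F,G\rangle} \circ \ell$ as the right homotopy and verify $S\circ H = F$ by successively rewriting $S\circ\ev_{\cB^I}$ as $\ev_\cB \circ \widehat{S}$ via \hyperref[fact:hat]{($\widehat{2}$)}, collapsing $\widehat{S}\circ\widehat{\langle F,G\rangle}$ to $\widehat{F}$ via \hyperref[fact:hat]{($\widehat{5}$)} together with \hyperref[fact:BI]{(P3)}, rewriting $\ev_\cB\circ\widehat{F}$ as $F\circ\ev_\cA$ via \hyperref[fact:hat]{($\widehat{2}$)} a second time, and absorbing $\ev_\cA\circ\ell = \id_\cA$. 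The symmetric argument with $T$ in place of $S$ gives $T\circ H = G$, so $H$ is the desired right homotopy via $\cB^I$.

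For $(\Leftarrow)$, given a right homotopy $H:\cA\to\cB^I$ with $S\circ H = F$ and $T\circ H = G$, I would observe that the weak equivalence $C$ becomes invertible in $\GrayCat[\cW^{-1}]$, and $S\circ C = T\circ C = \id_\cB$ from \hyperref[fact:BI]{(P1)} forces $\pi(S) = \pi(T) = \pi(C)^{-1}$; post-composing with $\pi(H)$ gives $\pi(F) = \pi(G)$, and applying the isomorphism $\Phi$ of Theorem \ref{thm:main} yields $[F] = [G]$ in $\hoTriCat$.

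The main obstacle is securing the \emph{strict} equality $S\circ H = F$ rather than a mere pseudonatural equivalence: this prevents simply applying Corollary \ref{cor:cofibweakfuncts} to $\langle F,G\rangle$ and strictifying, since such a $\widetilde H$ would only satisfy $S\circ\widetilde H\simeq F$ pseudonaturally. Routing through $\widehat{\langle F,G\rangle}$ and leveraging the strict compatibilities \hyperref[fact:hat]{($\widehat{2}$)} and \hyperref[fact:hat]{($\widehat{5}$)} is exactly what supplies this strictness.
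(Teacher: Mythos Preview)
Your argument is correct, and the backward direction matches the paper's almost verbatim. The forward direction, however, takes a genuinely different route. The paper deduces it \emph{formally} from the main theorem: from $[F]=[G]$ it invokes injectivity of $\Phi$ (Theorem~\ref{thm:main}) to get $F=G$ in $\GrayCat[\cW^{-1}]$, hence $\widehat{F}=\widehat{G}$ there, and then appeals to general model-category theory (Hirschhorn, Thm.~8.3.9) to obtain a right homotopy $H:\widehat{\cA}\to\widehat{\cB}^I$ between $\widehat{F}$ and $\widehat{G}$, finally transferring to $\cA\to\cB^I$ via the section $\ell$ and the map $\ev_\cB^I$ from \hyperref[fact:BI]{(P2)}. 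You instead construct the homotopy \emph{directly} from the pseudonatural equivalence, setting $H=\ev_{\cB^I}\circ\widehat{\langle F,G\rangle}\circ\ell$ and verifying the strict equalities with \hyperref[fact:hat]{($\widehat{2}$)}, \hyperref[fact:hat]{($\widehat{5}$)}, and \hyperref[fact:BI]{(P3)}. Your approach is more elementary and self-contained---it needs neither Theorem~\ref{thm:main} nor the external model-category reference for this direction---while the paper's approach highlights that the corollary is a formal shadow of the main isomorphism. Both ultimately rely on the same section $\ell$ to pass from $\widehat{\cA}$ back to $\cA$.
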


\begin{proof}
First suppose $F$ is pseudo-natually equivalent to $G$, i.e. $\Phi(F) = [F] = [G] = \Phi(G)$. By the injectivity of $\Phi$, $F = G$ in $\GrayCat[\cW^{-1}]$. Since $\ev_\cA, \ev_\cB \in \cW$, it follows by \hyperref[fact:hat]{($\widehat{2}$)} that $\widehat{F} = \widehat{G}$ in $\GrayCat[\cW^{-1}]$. By the general theory of model categories, $\GrayCat[\cW^{-1}]$ is equivalent to the category of fibrant-cofibrant $\Gray$-categories together with homotopy classes of $\Gray$-functors (see \cite[Thm.~8.3.9]{hirschhorn_2003}). Through this equivalence we obtain that $\widehat{F}$ is homotopic to $\widehat{G}$ since $\widehat{A}$ and $\widehat{B}$ are fibrant-cofibrant. More specifically, there exists a $\Gray$-functor $H: \widehat{A} \rightarrow \widehat{B}^I$ such that $S \circ H = \widehat{F}$ and $T \circ H = \widehat{G}$.

By \hyperref[fact:BI]{(P2)}, there exists a map $\ev_\cB^I: \widehat{B}^I \to \cB^I$ such that the following diagrams commute.
\begin{center}
\begin{tikzcd}
\widehat{\cB}^I \arrow[r, "S"] \arrow[d, "\ev_\cB^I"'] & \widehat{\cB} \arrow[d, "\ev_\cB"] \\
\cB^I \arrow[r, "S"']                                  & B                                 
\end{tikzcd}    
\hspace{50pt}
\begin{tikzcd}
\widehat{\cB}^I \arrow[r, "T"] \arrow[d, "\ev_\cB^I"'] & \widehat{\cB} \arrow[d, "\ev_\cB"] \\
\cB^I \arrow[r, "T"']                                  & B                                 
\end{tikzcd}
\end{center}
As before, since $\cA$ is cofibrant, there exists a $\Gray$-functor section $\ell: \cA \to \widehat{\cA}$ for $\ev_\cA$. By defining the $\Gray$-functor $H': \cA \to \cB^I$ by $H' \coloneqq \ev^I_\cB \circ H \circ \ell$ we see that 
\begin{align*}
S \circ H' &= \cA \xrightarrow{\ell} \widehat{\cA} \xrightarrow{H} \widehat{\cB}^I \xrightarrow{\ev_\cB^I} \cB^I \xrightarrow{S} \cB\\
&= \cA \xrightarrow{\ell} \widehat{\cA} \xrightarrow{H} \widehat{\cB}^I \xrightarrow{S} \widehat{\cB} \xrightarrow{\ev_\cB} \cB\\
&= \cA \xrightarrow{\ell} \widehat{\cA} \xrightarrow{\widehat{F}} \widehat{\cB} \xrightarrow{\ev_\cB} \cB\\
&\underset{\hyperref[fact:hat]{(\widehat{2})}}{=}  \cA \xrightarrow{\ell} \widehat{\cA} \xrightarrow{\ev_\cA} \cA \xrightarrow{F} \cB = F
\end{align*}
and similarly $T \circ H' = G$. Thus $F,G$ are homotopic. Conversely, suppose $F,G$ are homotopic so there exists a $\Gray$-functor $H: \cA \to \cB^I$ with $S \circ H = F$ and $T \circ H = G$. By \hyperref[fact:BI]{(P1)},  $(C: \cB \to \cB^I) \in \cW$ with $S \circ C = T \circ C$, so $S = T$ in $\GrayCat[\cW^{-1}].$ This implies $F = S \circ H = T \circ H = G$ in $\GrayCat[\cW^{-1}]$ and by the well-definedness of $\Phi$ we have that $[F] = \Phi(F) = \Phi(G) = [G]$. Thus $F$ is pseudo-naturally equivalent to $G$.
\end{proof}

\appendix
\section{\texorpdfstring{$\Gray$}{Gray}-categories}\label{appendix:Gray}
We present the data and axioms of a $\Gray$-category in order to fix notation for Appendix \ref{appendix:Gr} and \ref{appendix:PB}.
\begin{notation}
\label{rem:UnpackGrayMonoid}
A $\Gray$-category $\cC$ consists of the following data:
\begin{enumerate}[label=(D\arabic*)]
\item[(D0)] 
\label{Gray:Objects}
a collection of objects $\cC_0$, denoted by lower case letters $a,b,c$ 
\item 
\label{Gray:2cat}
for $a,b \in \cC_0$, a strict $2$-category $\cC(a,b)$ where we write $f: a \to b$ whenever $f \in \cC(a,b)$, composition of 1-morphisms (called 2-morphisms in $\cC$) is denoted by $\otimes$, and composition of 2-morphisms (called 3-morphisms in $\cC$) is denoted by $\circ$; 
\item 
\label{Gray:Id}
for each $a \in \cC_0$, an \emph{identity} $\id_a: a \to a$;
\item 
\label{Gray:tensor}
for objects $a,b,c,d \in \cC_0$ and 1-morphisms $g:c \to d$, $f: a \to b$, strict covariant and contravariant \emph{hom$-2$-functors} $g_* = g\boxtimes -$ and $f^* = -\boxtimes f$:
\begin{align*}
g_* : \cC(b,c) \to \cC(b,d) \quad \text{and} \quad
f^* : \cC(b,c) \to \cC(a,c), 
\end{align*}
\item
\label{Gray:Interchanger}
an \emph{interchanger} $3$-isomorphism
$\Sigma_{\gamma,\xi}$ for each pair of ``horizontally composable'' 2-morphisms $\xi: g \Rightarrow g'$ with $g,g': b \to c$ and $\gamma:f \Rightarrow  f'$ with $f,f':a \to b$:
\[
\Sigma_{\xi,\gamma}: \left( \xi \boxtimes \id_{f'}\right)\otimes \left(\id_{g} \boxtimes \gamma\right) \Rrightarrow \left(\id_{g'} \boxtimes \gamma \right)\otimes \left(\xi \boxtimes \id_f \right)
\]
\end{enumerate}
subject to the following conditions:
\begin{enumerate}[label=(C\arabic*)]
\item 
pre-$\boxtimes$ and post-$\boxtimes$ agree, i.e., 
for composable 1-morphisms $g:b \to c$ and $f : a \to b$, 
$$g_* f = f^* g = g \boxtimes f;$$
\item 
$\boxtimes$ is strictly unital and associative, i.e., the following hold whenever they make sense:
\begin{align*} 
(\id_b)_* &= \mathrm{id}_{\cC(a,b)} = (\id_a)^* \\
g_* f_* &= (g\boxtimes f)_*\\
f^*  g^* &= (g\boxtimes f)^* \\
g_* f^* &= f^* g_*;
\end{align*}
\item the interchanger $\Sigma$ respects identities, i.e., 
for a 1-morphism $f: b \to c$ and 2-morphisms $\xi,\gamma$, the following hold whenever they make sense:
\begin{align*}
\Sigma_{\xi, \id_f} = \id_{\xi \boxtimes f} \qquad \text{and} \qquad
\Sigma_{\id_f, \gamma} = \id_{f \boxtimes \gamma}
\end{align*}
\item
\label{Interchanger:Composition}
the interchanger $\Sigma$ respects $\otimes$, i.e., 
for $g \xRightarrow{\xi} g'\xRightarrow{\xi'} g''$ and $f \xRightarrow{\gamma} f'\xRightarrow{\gamma'} f''$, the following hold whenever they make sense:
\begin{align*}
\Sigma_{\xi'\otimes \xi, \gamma}&= \left(\Sigma_{\xi',\gamma} \otimes (\xi\boxtimes f ) \right)\circ \left( (\xi' \boxtimes {f'})\otimes \Sigma_{\xi,\gamma}\right)\\
\Sigma_{\xi,\gamma'\otimes \gamma} &= \left(({g'} \boxtimes\gamma')  \otimes \Sigma_{\xi,\gamma}\right) \circ \left(\Sigma_{\xi,\gamma'}\otimes ( g \boxtimes \gamma )  \right)
\end{align*}
\item 
\label{Interchanger:Natural}
the interchanger $\Sigma$ is natural, i.e., 
for $g,g': b \to c$, $\xi,\xi':g\Rightarrow g'$ and $\Xi: \xi\Rrightarrow \xi'$; and $f,f': a \to b$, $\gamma,\gamma':f\Rightarrow f'$ and $\Gamma:\gamma\Rrightarrow \gamma'$:
\begin{align*}
\Sigma_{\xi',\gamma} \circ \left((\Xi \boxtimes {f'}) \otimes \id_{{g}\boxtimes \gamma} \right) &= \left(\id_{{g'}\boxtimes \gamma} \otimes (\Xi \boxtimes f) \right)\circ \Sigma_{\xi,\gamma}\\
\Sigma_{\xi,\gamma'}\circ \left( \id_{\xi\boxtimes {f'}} \otimes \left(g \boxtimes  \Gamma \right) \right) &=  \left( (g' \boxtimes \Gamma ) \otimes \id_{ \xi \boxtimes f } \right) \circ\Sigma_{\xi,\gamma}
\end{align*}
\item 
the interchanger $\Sigma$ respects $\boxtimes$, i.e., 
for $1$-morphisms $f,g,h$ and 2-morphisms $\sigma,\xi,\gamma$, the following hold whenever they make sense:
\begin{align*}
\Sigma_{ h\boxtimes \xi, \gamma} = h \boxtimes \Sigma_{\xi,\gamma} &&
\Sigma_{\sigma\boxtimes g, \gamma} = \Sigma_{\sigma, g\boxtimes \gamma} &&
\Sigma_{\sigma,\xi\boxtimes f} = \Sigma_{\sigma,\xi} \boxtimes f
\end{align*}
\end{enumerate}
\end{notation}

\section{Gurski's \texorpdfstring{$\Gr$}{Gr} construction}\label{appendix:Gr}
We recall \cite[Def.~10.7]{gurski_2013} specialized to the case of a $\Gray$-category.
\begin{construction}\label{construction:Gr}
The $\Gray$-category $\Gr \cA$ of a $\Gray$-category $\cA$ is constructed as follows. 
\begin{itemize}
    \item[($\Gr$0)] $\Gr \cA$ has the same objects as $\cA$, i.e. $(\Gr \cA)_0 = \cA_0$. 

    \item[($\Gr$1)] For $a,b \in \cA_0$, the objects in the 2-category $(\Gr \cA) (a,b)$ are (finite) strings $\{f_i\}$ of composable 1-cells of $\cA$ (starting at $a$ and ending at $b$). For composable lists $
    \{g_j\}$ and $\{f_i\}$ we define $\{g_j\} \boxtimes \{f_i\}$ to be their concatenation $\{f_i, g_j\}$. Notice the identity for an object $a \in \Gr \cA$ will be the empty string $\varnothing_a$ starting and ending in $a$.

\item[($\Gr$2)] A morphism $\overline{\alpha}$ in $(\Gr \cA) (a,b)$ consists of a composable string $(\overline{\alpha}_n,\hdots,\overline{\alpha}_1)$ of generator morphisms $\overline{\alpha_{k}}: \{f_i\}_{i=1}^{n_1} \to \{g_j\}_{j=1}^{n_2}$ which themselves consist of:
\begin{itemize}
    \item[(a)] Three numbers $k,\ell_1,\ell_2$ with $k \leq \ell_i$ such that
    \begin{itemize}
        \item[$\bullet$] If $m < k$, then $f_m = g_m$ and
        \item[$\bullet$] If $m > 0$, then $f_{\ell_1 + m} = g_{\ell_2 + m}$ if either side exists. (so $n_1 - \ell_1 = n_2 - \ell_2$)
    \end{itemize}
    \item[(b)] A pair $(\sigma,\tau)$ where $\sigma = (\sigma,D)$ and $\tau = (\tau,E)$ are so-called associations for $\{f_i\}_{i = k}^{\ell_1}$ and $\{g_j\}_{j = k}^{\ell_2}$ respectively. Since $\cA \in \GrayCat$, the composition $\boxtimes$ of 1-morphisms in $\cA$ is associative, so this data is superfluous and we will not present more detail.
    \item[(c)] A 2-morphism between the ``evaluated associations'' for $\{f_i\}_{i=k}^{\ell_1}$ and $\{g_j\}_{j=k}^{\ell_2}$. In our case, this amounts to a 2-morphisms $\alpha: f_{\ell_1} \boxtimes \cdots \boxtimes f_k \Rightarrow g_{\ell_2} \boxtimes \cdots \boxtimes g_k$ in $\cA$.
\end{itemize}
The composition $\otimes$ of $1$-morphisms in $(\Gr T)(a,b)$ is given by concatenation, so the empty 1-morphism $\varnothing_{\{f_i\}}$ is the identity on $\{f_i\}$.

For a basic 2-morphism $\overline{\alpha}$ and a 1-morphism $\{h_k\}_{k=1}^{n_3}$ in $\Gr \cA$ we define their composites $\overline{\alpha} \boxtimes \{h_k\}: \{h_k,f_i\} \Rightarrow \{h_k,g_j\}$ and $\{h_k\} \boxtimes \overline{\alpha}: \{f_i,h_k\} \Rightarrow \{g_j,h_k\}$ (whenever they make sense) by
\begin{align*}
\overline{\alpha} \boxtimes \{h_k\} &\coloneqq (k + n_3, \ell_1 + n_3, \ell_2 + n_3, \sigma, \tau, \alpha).\\
\{h_k\} \boxtimes \overline{\alpha} &\coloneqq (k, \ell_1, \ell_2, \sigma, \tau, \alpha).
\end{align*}

We extend $\boxtimes$ for arbitrary 2-morphisms $\overline{\alpha} = (\overline{\alpha}_n,\hdots,\overline{\alpha}_1)$ by
\begin{align*}
\overline{\alpha} \boxtimes \{h_k\} &\coloneqq (\overline{\alpha}_n \boxtimes \{h_k\} ,\hdots,\overline{\alpha}_1 \boxtimes \{h_k\} ),\\
\{h_k\} \boxtimes \overline{\alpha} &\coloneqq (\{h_k\} \boxtimes \overline{\alpha}_n,\hdots,\{h_k\} \boxtimes \overline{\alpha}_1 ).
\end{align*}
With the data presented thus far, $\Gr \cA$ forms a sesquicategory which is free on a computad. Once we finish constructing the $\Gray$-category $\Gr \cA$, \cite[Corollary 9.4]{lack_2011} will yield that $\Gr \cA$ is cofibrant.

\item[($\Gr$3)] For generator 1-morphisms $\overline{\alpha}, \overline{\beta}$ in $(\Gr \cA)(a,b)$, a 2-morphism $\Gamma: \overline{\alpha} \Rightarrow \overline{\beta}$ in $(\Gr \cA)(a,b)$ is simply a 3-morphism $\Gamma:  [\alpha] \Rrightarrow [\beta]$ in $\cA$ where $[\alpha],[\beta]: f_{n_1} \boxtimes \cdots \boxtimes f_1 \Rightarrow g_{n_2} \boxtimes \cdots \boxtimes g_1$ are given by 
\begin{align*}
[\alpha] \coloneqq f_{n_1} \boxtimes \cdots \boxtimes f_{\ell_1+1} \boxtimes \alpha \boxtimes f_{k-1} \boxtimes \cdots \boxtimes f_1,\\
[\beta] \coloneqq  g_{n_2} \boxtimes \cdots \boxtimes g_{\ell_2+1}  \boxtimes \beta \boxtimes g_{k-1} \boxtimes \cdots \boxtimes g_1.
\end{align*} 
For general 1-morphisms $\overline{\alpha} = \overline{\alpha}_n \otimes \cdots \otimes \overline{\alpha}_1$ and $\overline{\beta} = \overline{\beta}_{n'} \otimes \cdots \otimes \overline{\beta}_1$ in $\Gr \cA(a,b)_1$, $\Gamma: \overline{\alpha} \Rightarrow \overline{\beta}$ is simply a 3-morphism $\Gamma: [\alpha_n] \otimes \cdots \otimes [\alpha_1] \Rrightarrow [\beta_{n'}] \otimes \cdots \otimes [\beta_1]$ in $\cA$.

The vertical composition $\circ$ of 2-morphisms in $(\Gr \cA)(a,b)$ is inherited from $\cA$ and is thus strictly associative and unital. The horizontal composition $\otimes$ of 2-morphisms in $(\Gr \cA)(a,b)$ is also inherited from $T$, so this composition satisfies the interchange law and is strictly associative. Thus $\Gr T (a,b)$ is actually a strict 2-category.

For a 1-morphism $\{h_k\}$ in $\Gr \cA$ we define $\{h_k\} \boxtimes \Gamma: \{h_k\} \boxtimes \overline{\alpha} \Rrightarrow \{h_k\} \boxtimes \overline{\beta}$ (whenever it makes sense) by
$$(\boxtimes_k h_k) \boxtimes ([\alpha_n] \otimes \cdots \otimes [\alpha_1] ) \xRrightarrow{ (\boxtimes_k h_k) \boxtimes \Gamma} (\boxtimes_k h_k) \boxtimes ( [\beta_{n'}] \otimes \cdots \otimes [\beta_1]).$$
We similarly  define $\Gamma \boxtimes \{h_k\}$.

\item[($\Gr\!\Sigma$)] For basic $2$-morphisms $\overline{\alpha}$ and $\overline{\beta}$ in $\Gr \cA$ we may define the interchanger $\Sigma_{\overline{\alpha},\overline{\beta}}$ (whenever it makes sense) to be the interchanger $\Sigma_{[\alpha],[\beta]}$ in $\cA$. One then extends $\Sigma$ to general 2-morphisms in $\Gr \cA$ by the same formula in \hyperref[Interchanger:Composition]{(C4)}.
\end{itemize}
By \cite[Thm.~10.8]{gurski_2013}, this data serves to equip $\Gr \cA$ with the structure of a $\Gray$-category. We continue recalling \cite[Def.~10.7]{gurski_2013} and \cite[Thm.~10.9]{gurski_2013}:
\end{construction}

\begin{construction}
 We define the $\Gray$-functor $\ev_\cA: \Gr \cA \to \cA$ as
 \begin{itemize}
     \item[(ev0)] On objects, $\ev_\cA$ is the identity.
     \item[(ev1)] On $1$-morphisms,
     $$\ev(\{f_i\}) = [f_i] : = f_n \boxtimes \cdots \boxtimes f_1$$
     where $[\;\;] = I_a$ for the empty 1-cell $\varnothing_a: a \to a$. We will also use the notation $[b_j,a_i] \coloneqq  [b_j] \boxtimes [a_i]$.
    \item[(ev2)] 
    For a basic 2-morphism $\overline{\alpha}: \{f_i\} \Rightarrow \{g_j\}$, we define $\ev_\cA(\overline{\alpha}) = [\alpha]$ where we are using the same notation as in Construction \ref{construction:Gr}.
    We then extend $\ev_{\cA}$ to general 2-morphisms in $\cA$ by
    $$\ev(\overline{\alpha}_n \otimes \cdots \otimes \overline{\alpha}_1) \coloneqq \ev(\overline{\alpha}_n) \otimes \cdots \otimes \ev(\overline{\alpha}_1).$$
    
    and set $\ev(\varnothing_{\{f_i\}}) \coloneqq \id_{[f_i]}$.
    \item[(ev3)] On $3$-morphisms, $\ev_\cA$ is the identity.
\end{itemize}
\begin{remark}
By construction $\ev_\cA$ is surjective at all levels and (fully) faithful at the top level, i.e., $\ev_\cA$ is a trivial fibration in Lack's model structure for $\GrayCat$. So $\Gr \cA \xrightarrow{\ev_\cA} \cA$ is indeed the cofibrant replacement for $\cA$ in $\GrayCat$.
\end{remark}
\end{construction}
We now recall \cite[\S10.6]{gurski_2013}.
\begin{construction}
For a weak 3-functor $F: \cA \to \cB$ between $\Gray$-categories, we define the $\Gray$-functor $\Gr F: \Gr \cA \to \Gr \cB$ together with an equivalence pseudo-icon $(\varphi,M,\Pi)$ where

\begin{center}
\small
\begin{tikzcd}[column sep = 0pt, row sep = 10]
\Gr \cA \arrow[dd, "\ev_\cA"'] \arrow[rr, "\Gr F"] &                     & \Gr \cB \arrow[dd, "\ev_\cB"] \\
                                                                                                & \varphi\Downarrow\quad &                                             \\
\cA \arrow[rr, "F"']                                                     &                     & \cB                                      
\end{tikzcd}
\end{center}

\begin{itemize}
    \item[($\Gr$F0)] $\Gr F(s) = F(s)$ for every object $s \in S$.
    \item[($\Gr$F1)] $\Gr F(\{f_i\}) = \{F f_i\}$ for a string $\{f_i\} \in \Gr S$.
    
    \item[($\varphi1$)] Let $\{f_i\}_{i=1}^n$ be a 1-morphism in $\Gr \cA$. 
    If $n = 0$, so that $\{f_i\} = \varnothing_a$, we define the equivalence
    $\varphi_{\{f_i\}}: I_{F(a)} \Rightarrow F(\varnothing_a)$
    to be the unitor $\varphi_{\{f_i\}} \coloneqq  F^0_a$ of $F$.
    When $n = 1$, we define
    $\varphi_{\{f_i\}}: F(f_1) \Rightarrow F(f_1)$
    to be the identity $\varphi_{\{f_i\}} \coloneqq  \id_{F(f_1)}$. 
    When $n > 2$, we define
    $\varphi_{\{f_i\}}: Ff_n \boxtimes \cdots \boxtimes Ff_1 \Rightarrow F\big(f_n \boxtimes \cdots \boxtimes f_1\big)$
    to be the leftmost composition of tensorators $F^2$ of $F$ tensored with identities
    $$F^2_{f_n \boxtimes \cdots \boxtimes f_2 ,f_1} \otimes \cdots \otimes (F^2_{f_n \boxtimes f_{n-1},f_{n-2}} \boxtimes  Ff_{n-3} \boxtimes \cdots \boxtimes  Ff_1) \otimes (F^2_{f_n,f_{n-1}} \boxtimes Ff_{n-2} \boxtimes Ff_{n-3} \boxtimes \cdots \boxtimes Ff_1).$$
     We also choose the obvious adjoint $\varphi^{\sqdot}$, unit, and counit.
     For simplicity, we will continue to denote this $n$-fold tensor by $[\,\cdot\,]$, so that
    $$\varphi_{\{f_i\}}: [Ff_i] \Rightarrow F[f_i].$$
    
\item[($\Gr$F2)] For a basic $\overline{\alpha} = (k,\ell_1,\ell_2,(\sigma,D),(\tau,E),\alpha)$, we define
$$\Gr F(\overline{\alpha}) = (k,\ell_1,\ell_2,(\sigma,FD),(\tau,FE), \Gr F \alpha).$$
where we recall the data of $(\sigma,FD)$ and $(\tau,FE)$ is superfluous and define $\Gr F \alpha$ so that the following diagram commutes.
\vspace{-10pt}
\begin{center}
\small
\begin{tikzcd}
{[Ff_i]} \arrow[r, "\Gr F \alpha", Rightarrow] \arrow[d, "{[\varphi_{\{f_{\ell_1 \geq i \geq k}\}}]}"', Rightarrow] & {[F g_j]}                                                                                                                           \\
{[Ff_{i>\ell_1},F[f_{\ell_1 \geq i \geq k}],Ff_{k > i}] } \arrow[r, "{[F\alpha]}"', Rightarrow]                             & {[Fg_{j>\ell_2},F[g_{\ell_2 \geq j \geq k}],Fg_{k > j}] } \arrow[u, "{[\varphi^{\sqdot}_{\{g_{\ell_2 \geq j \geq k}\}}]}"', Rightarrow]
\end{tikzcd}
\end{center}
We extend this for general 2-morphisms by
$\Gr F (\overline{\alpha}_n \otimes \hdots \otimes \overline{\alpha}_1) \coloneqq \Gr F(\overline{\alpha}_n) \otimes \hdots \otimes \Gr F (\overline{\alpha}_1).$

\item[($\varphi2$)]\label{varphi2}
For a basic 2-morphism $\overline{\alpha}$ in $\Gr \cA$, we again denote $f_{n_1} \boxtimes \cdots \boxtimes f_{\ell_1} \boxtimes \alpha \boxtimes f_{k-1} \boxtimes \cdots \boxtimes f_1$ by $[\alpha]$.
We then define the naturality isomorphism $\varphi_{\overline{\alpha}}$ to be the following composite of adjoint equivalence data from $F$
\begin{center}
\small
\begin{tikzcd}[row sep = 7pt, column sep = 7pt]
{[Ff_i]} \arrow[ddd, "\varphi_{\{f_i\}}"', Rightarrow] \arrow[rrr, "{[\varphi_{\{f_{\ell_1 \geq i \geq k}\}}]}", Rightarrow] &       &       & {[Ff_{i>\ell_1},F[f_{\ell_1 \geq i \geq k}],Ff_{k > i}]} \arrow[lllddd, "\varphi_{\set{f_{i>\ell_1},[f_{\ell_1 \geq i \geq k}],f_{k>i}}}" description, Rightarrow] \arrow[ddd, "{[F\alpha]}", Rightarrow]                                       \\
                                                                                           & \qquad\cong\qquad\qquad\qquad &       &                                                                                                                                                     \\
                                                                                           &       &       &                                                                                                                                                     \\
{F[f_i]} \arrow[ddd, "{F[\alpha]}"', Rightarrow]                                           &       &       & {[Fg_{j>\ell_2},F[g_{\ell_2 \geq j \geq k}],Fg_{k > j}]} \arrow[lllddd, "\varphi_{\set{g_{j>\ell_2},[g_{\ell_2 \geq j \geq k}],g_{k>j}}}" description, Rightarrow] \arrow[ddd, "{[\varphi^{\sqdot}_{\{g_{\ell_2 \geq j \geq k}\}}]}", Rightarrow] \arrow[lll, "\qquad\cong", phantom] \\
                                                                                           &       &       &                                                                                                                                                     \\
                                                                                           &       & \qquad\qquad\qquad\qquad\cong &                                                                                                                                                     \\
{F[g_j]}                                                                                   &       &       & {[Fg_j]} \arrow[lll, "\varphi_{\{g_j\}}", Rightarrow]                                                                                            
\end{tikzcd}
\end{center}
We then extend this to general 2-morphisms in $\Gr \cA$ as usual.

\item[(M)] For every object $a \in \Gr \cA_0 = \cA_0$, we define the invertible 3-cell $M_a$ 
\begin{center}
\small
\begin{tikzcd}[column sep = 0pt, row sep = 10]
\id_{Fa} \arrow[dd, no head] \arrow[dd, no head, shift right] \arrow[rr, no head]  \arrow[rr, no head, shift right] &                     & \id_{Fa} \arrow[dd, "\varphi_{\varnothing_a}", Rightarrow] \\
                                                                                                & M_a\Ddownarrow\quad &                                             \\
\id_{Fa} \arrow[rr, "F^0_a"', Rightarrow]                                                     &                     & F (\varnothing_a)                                     
\end{tikzcd}
\end{center}
to be the identity $\id_{F^0_a}$. 
\item[($\Pi$)] We define the modification $\Pi$ to be the unique coherence isomorphism given by $F$.
In particular, $\Pi$ has component invertible 3-cells $\Pi_{\{f_i\},\{g_j\}}$ for $a \xrightarrow{\{f_i\}} b \xrightarrow{\{g_j\}} c$ in $\Gr \cA$
\begin{center}
\small
\begin{tikzcd}[column sep = 0pt, row sep = 10]
[Fg_j] \boxtimes [Ff_i] \arrow[dd, "\varphi_{\{g_j\}} \boxtimes \varphi_{\{f_i\}}"', Rightarrow]  \arrow[rr, no head] \arrow[rr, no head,shift right] &                     & \left[Ff_i, Fg_j\right] \arrow[dd, "\varphi_{\{g_j\} \boxtimes \{f_i\}}", Rightarrow] \\
                                                                                                & \Pi_{f,g}\Ddownarrow\qquad &                                             \\
F[g_j] \boxtimes F[f_i] \arrow[rr, "F^2_{[f_i],[g_j]}"', Rightarrow]                                                     &                     & F[f_i, g_j]                                    
\end{tikzcd}
\end{center}

\item[($\Gr$F3)] For a 3-morphism $\Gamma: \overline{\alpha} \Rrightarrow \overline{\beta}$ in $\Gr \cA$ we define $\Gr F (\Gamma): \Gr F (\overline{\alpha}) \Rrightarrow \Gr F (\overline{\beta})$ so that the following diagram commutes
\begin{center}
\small
\begin{tikzcd}
{[\varphi^{\sqdot}_{\{f_{\ell_2 \geq j \geq k}\}}] \otimes [F \alpha] \otimes [\varphi_{\{f_{\ell_1 \geq i \geq k\}}}] } \arrow[d, "\phi_{\,\overline{\alpha}}"'] \arrow[r, "\Gr F(\Gamma)\,"] & {[\varphi^{\sqdot}_{\{f_{\ell_2 \geq j \geq k}\}}] \otimes [F \beta] \otimes [\varphi_{\{f_{\ell_1 \geq i \geq k\}}}]} \\
{\varphi^{\sqdot}_{\{g_j\}} \otimes F[\alpha] \otimes \varphi_{\{f_i\}}} \arrow[r, "\id \otimes F\Gamma \otimes \id"']                                                                     & {\varphi^{\sqdot}_{\{g_j\}} \otimes F[\beta] \otimes \varphi_{\{f_i\}}} \arrow[u, "\phi^{\sqdot}_{\,\overline{\beta}}"']
\end{tikzcd}
\end{center}
where the 3-isomorphisms $\phi_{\overline{\alpha}}$ and $\phi^{\sqdot}_{\overline{\beta}}$ are similar to $\varphi_{\overline{\alpha}}$ and $\varphi^{\sqdot}_{\overline{\beta}}$ in \hyperref[varphi2]{($\varphi2$)}.
\end{itemize}
\end{construction}

\begin{properties}
We now review the properties outlined in Proposition \ref{fact:hat}.
\begin{enumerate}
\item[($\widehat{1}$)] \label{item:hat1} The fact that $(\varphi,M,\Pi)$ forms a pseudo-icon follows from the coherence theorem for weak 3-functors. We refer the interested reader to \cite[Thm.~10.13]{gurski_2013} and its corollary \cite[Cor.~10.15]{gurski_2013}.

\item[($\widehat{2}$)] \label{item:hat2} When $F$ is $\Gray$, $F$ is strict so
for every 1-morphism $\{f_i\}$ in $\Gr A$, $\varphi_{\{f_i\}} = \id$ by construction. Now consider a generator 2-morphism $\overline{\alpha} = (k,\ell_1,\ell_2,(\sigma,D),(\tau,E),\alpha)$ from $\{f_i\}$ to $\{g_j\}$,
$$\Gr F(\overline{\alpha}) = (k,\ell_1,\ell_2,(\sigma,FD),(\tau,FE),F\alpha).$$
Therefore, the following diagram strictly commutes
\begin{center}
\small
\begin{tikzcd}[column sep = 50pt]
\ev_\cB \circ \Gr F \{f_i\} \arrow[d, "\varphi_{\{f_i\}}"',Rightarrow] \arrow[r, "\ev_\cB \circ \Gr F (\overline{\alpha})",Rightarrow] & \ev_\cB \circ \Gr F \{g_j\} \arrow[d, "\varphi_{\{g_j\}}",Rightarrow] &  & {[Ff_i]} \arrow[d, no head] \arrow[d, no head, shift right] \arrow[r, "{[F\alpha]}",Rightarrow] & {[Fg_j]} \arrow[d, no head] \arrow[d, no head, shift left] \\
F \circ \ev_\cA \{f_i\} \arrow[r, "F \circ\, \ev_\cA (\overline{\alpha})"',Rightarrow]                                          & F \circ \ev_\cA \{g_j\}                                      &  & {F[f_i]} \arrow[r, "{F[\alpha]}"',Rightarrow]                                                   & {F[g_i]}                                                  
\end{tikzcd}
\end{center}
In other words, $\ev_\cB \circ \Gr F (\overline{\alpha}) = F \circ \ev_\cA (\overline{\alpha})$ and since both functors are strict, this equality holds for general 2-morphisms. Finally, since $\Gr F$ and $\ev_\cA$ are uniquely determined on 3-morphisms by coherence, it is immediate that $\ev_\cB \circ \Gr F = F \circ \ev_\cA$ at the level of 3-morphisms.
\item[($\widehat{3}$)] Since $\id_\cA$ is $\Gray$, the result is immediate from our discussion in \hyperref[item:hat2]{($\widehat{2}$)}.
\item[($\widehat{4}$)] From our discussion in \hyperref[item:hat1]{($\widehat{1}$)} we see that
\begin{center}
\small
\begin{tikzcd}[column sep = 0pt, row sep = 15pt]
                                                          &  &[10pt] \Gr \cA \arrow[rr, "\Gr F"] \arrow[dd, "\ev_\cA"'] &                      & \Gr \cB \arrow[dd, "\ev_\cB"] &                      &                                                    \\
                                                          &  &                                                    & \varphi^F \Downarrow &                               &                      &                                                    \\
                                                          &  & \cA \arrow[rr, "F"'] \arrow[rrdd, "GF"]            &                      & \cB \arrow[dd, "G"']          &                      & \Gr \cB \arrow[ll, "\ev_\cB"'] \arrow[dd, "\Gr G"] \\
                                                          &  &                                                    &                      &                               & \Leftarrow \varphi^G &                                                    \\
\Gr \cA\qquad \arrow[rruu, "\ev_\cA"] \arrow[rrdd, "\Gr (GF)"'] &  & \qquad \varphi^{GF}\Nearrow                               &                      & \cC                           &                      & \Gr \cC \arrow[ll, "\ev_\cC"]                      \\
                                                          &  &                                                    &                      &                               &                      &                                                    \\[19pt]
                                                          &  & \Gr \cC \arrow[rruu, "\ev_\cC"']                   &                      &                               &                      &                                                   
\end{tikzcd}
\end{center}
Therefore,
\begin{align*}
\Gr \cA \xrightarrow{\Gr (GF)} \Gr \cC 
&\cong \Gr \cA \xrightarrow{\ev_\cA} \cA \xrightarrow{F} \cB \xrightarrow{G} \cC \xrightarrow{\ev_\cC^{\sqdot}} \Gr \cC\\
&\cong \Gr \cA \xrightarrow{\ev_\cA} \cA \xrightarrow{\ev_\cA^{\sqdot}} \Gr \cA \xrightarrow{\Gr F} \Gr B \xrightarrow{\ev_\cB} \cB \xrightarrow{\ev_\cB^{\sqdot}} \Gr \cB \xrightarrow{\Gr G} \Gr \cC \xrightarrow{\ev_C} \cC \xrightarrow{\ev_C^{\sqdot}} \Gr \cC\\
&\cong \Gr \cA \xrightarrow{\Gr F} \Gr \cB \xrightarrow{\Gr G} \Gr \cC ,
\end{align*}
where $\ev_\cA^{\sqdot},\ev_\cB^{\sqdot},\ev_\cC^{\sqdot}$ are pseudo-inverse to $\ev_\cA,\ev_\cB,\ev_\cC$ respectively.

\item[($\widehat{5}$)] Notice that $\Gr (G \circ F)$ and $\Gr G \circ \Gr F$ always agree on objects and 1-morphisms in $\Gr \cA$.
We now suppose $F$ is $\Gray$. Consider a generator 2-morphism $\overline{\alpha} = (k,\ell_1,\ell_2,(\sigma,F),(\tau,E),\alpha)$ in $\Gr \cA$. 
We must show $\Gr G \Gr F \alpha = \Gr (G \circ F) \alpha$. 
As we saw in ($\widehat{2}$), $\Gr F(\overline{\alpha}) = (k,\ell_1,\ell_2,(\sigma,FD),(\tau,FE),F\alpha)$ and
$$\Gr G \Gr F \alpha = (\varphi^G)^{\sqdot}_{\{Fg_{\ell_2 \geq j \geq k}\}} \otimes GF\alpha \otimes \varphi^G_{\{Ff_{\ell_1 \geq i \geq k}\}}.$$
This agrees with 
$$\Gr (G \circ F)(\overline{\alpha}) =  (\varphi^{GF})^{\sqdot}_{\{g_{\ell_2 \geq j \geq k}\}} \otimes GF\alpha \otimes \varphi^{GF}_{\{f_{\ell_1 \geq i \geq k}\}}.$$
because in this case the constraint data for $GF$ is simply that of $G$ restricted to the image of $F$. 
Since $\Gr G \circ \Gr F$ and $\Gr( G \circ F)$ are strict, we conclude that these functors agree on general 2-morphisms in $\Gr \cA$. 
We conclude by a similar argument that these functors agree on 3-morphisms as well.
Thus $\Gr(G \circ F) = \Gr G \circ \Gr F$.

Now suppose $G$ is $\Gray$. We must again show $\Gr F \Gr G \alpha = \Gr (G \circ F) \alpha$ for a generator 2-morphism $\overline{\alpha}$ in $\Gr \cA$. Recall that
$$\Gr F \alpha = (\varphi^F)^{\sqdot}_{\{g_{\ell_2 \geq j \geq k}\}} \otimes F\alpha \otimes \varphi^F_{\{f_{\ell_1 \geq i \geq k}\}}.$$
Thus
$$\Gr G \circ \Gr F \alpha = G(\varphi^F)^{\sqdot}_{\{g_{\ell_2 \geq j \geq k}\}} \otimes GF\alpha \otimes G\varphi^F_{\{f_{\ell_1 \geq i \geq k}\}}.$$
This agrees with
$$\Gr (G \circ F) \alpha =  (\varphi^{GF})^{\sqdot}_{\{g_{\ell_2 \geq j \geq k}\}} \otimes GF\alpha \otimes \varphi^{GF}_{\{f_{\ell_1 \geq i \geq k}\}}.$$
because in this case, the constraint data for $GF$ is that of $F$ pushed through the functor $G$.
We then conclude that $\Gr (G \circ F)$ and $\Gr G \circ \Gr F$ agree on arbitrary 2-morphisms in $\Gr \cA$ as usual. A similar argument reveals that these functors also agree on 3-morphisms, and we again conclude $\Gr (G \circ F) = \Gr G \circ \Gr F$.
\end{enumerate}
\end{properties}

\section{Lack's path object construction}\label{appendix:PB}
To show \hyperref[fact:BI]{(P1)}, we recall \cite[Prop.~4.1]{lack_2011} and include details which were originally left to the reader.
\begin{construction}
The path $\Gray$-category $\bbP \cB$ of a $\Gray$-category $\cB$ is constructed as follows.
\begin{itemize}
\item[($\bbP0$)] An object $a \in \bbP \cB$ is a biequivalence $\Vec{a}: Sa \to Ta$ in $\cB$.
\item[($\bbP1$)] A 1-morphism $f: a \to b$ in $\bbP \cB$ consists a tuple $f = (Sf,Tf,\Vec{f}\,)$ where $Sf: Sa \to Sb$ and $Tf: Ta \to Tb$ are 1-morphisms in $\cB$, and $\Vec{f}$ is an equivalence in $\cB$ with:
\begin{center}
\small
\begin{tikzcd}[column sep = 5pt, row sep = 5pt]
Sa \arrow[dd, "\Vec{a}"'] \arrow[rr, "Sf"] &              & Sb \arrow[dd, "\Vec{b}"] \\
                                 & \Downarrow \Vec{f} &                         \\
Ta \arrow[rr, "Tf"']                 &              & Tb                  
\end{tikzcd}
\end{center}
\item[($\bbP2$)] A 2-morphism $\theta: f \Rightarrow g$ in $\bbP \cB$ consists of a tuple $(S\theta,T\theta,\Vec{\theta}\,)$ where $S\theta: Sf \Rightarrow Sg$ and $T\theta: Tf \Rightarrow Tg$ are 2-morphisms in $\cB$, and $\Vec{\theta}$ is an invertible 3-morphisms in $\cB$ with:
\begin{center}
\small
\begin{tikzcd}[column sep = 5pt, row sep = 2pt]
Sa \arrow[dd, "\Vec{a}" description] \arrow[rr, "Sg"', ""{name=Dr,inner sep=2pt}] \arrow[rr, "Sf", bend left, shift left=3, ""{name=Ur,inner sep=2pt,below}] &                     & Sb \arrow[dd, "\Vec{b}" description] &                              &   Sa \arrow[dd, "\Vec{a}" description] \arrow[rr, "Sf"]            &                     & Sb \arrow[dd, "b_\alpha" description]\\
                                                                    & \Downarrow \Vec{g} &                                        & \xRrightarrow{\Vec{\theta}} &                                                                                                      & \Downarrow \Vec{f} &                                          \\
Ta \arrow[rr, "Tg"']                                                                               &                     & Tb               &                              &     Ta \arrow[rr, "Tf", ""{name=U,inner sep=2pt,below}] \arrow[rr, "Tg"', bend right, shift right=3, ""{name=D,inner sep=2pt}] &                     & Tb                                                      
\arrow[Rightarrow, from=U, to=D, "\;T\theta\;"]
\arrow[Rightarrow, from=Ur, to=Dr, "\;S\theta\;"]
\end{tikzcd}
\end{center}
\item[($\bbP3$)] A 3-morphism $\Gamma: \theta \Rrightarrow \sigma$ in $\bbP \cB$ consists of a tuple $(S\Gamma, T\Gamma)$ where $S\Gamma: S\theta \Rrightarrow S\sigma$ and $T\Gamma: T\theta \Rrightarrow T\sigma$ are 3-morphisms in $\cB$ that commute with $\Vec{\theta}$ and $\Vec{\sigma}$ in the obvious way.
\end{itemize}
There is an organic way of equipping $\bbP \cB$ with the structure of a $\Gray$-category by inheriting composites and interchangers from the $\Gray$-category $\cB$. We now define the ``source'' and ``target'' $\Gray$-functors $S,T: \bbP\cB \to \cB$.
\begin{itemize}
    \item[(ST0)] For an object $a \in \bbP \cB_0$, we set $S(a) \coloneqq Sa$ and $T(a) \coloneqq Ta$,
    \item[(ST1)] For a 1-morphism $f \in \bbP \cB_1$, we set $S(f) \coloneqq Sf$ and $T(f) \coloneqq Tf$,
    \item[(ST2)] For a 2-morphism $\theta \in \bbP \cB_2$, we set $S(\theta) \coloneqq S\theta$ and $T(\theta) \coloneqq T\theta$,
    \item[(ST3)] For a 3-morphism $\Gamma \in \bbP \cB_3$, we set $S(\Gamma) \coloneqq S\Gamma$ and $T(\Gamma) \coloneqq T\Gamma$
\end{itemize}
The fact that $S$ and $T$ are $\Gray$-functors is due to the fact that composites in $\bbP \cB$ are inherited from those in $\cB$. We now define the ``constant'' $\Gray$-functor $C: \cB \to \bbP\cB$. 
\begin{enumerate}
\item[(C0)] For an object $b \in \cB_0$, we set $C(b)$ to be the trivial biequivalence of $b$ with itself, i.e.  $C(b) \coloneqq \id_b$,
\item[(C1)] For a 1-morphism $f \in \cB_1$, we set
$C(f) \coloneqq (f,f,\id_{f})$,
\item[(C2)] For a 2-morphism $\theta \in \cB_2$, we set
$C(\theta) \coloneqq (\theta,\theta,\id_\theta)$,
\item[(C3)] For a 3-morphism $\Gamma \in \cB_3$, we set
$C(\Gamma) \coloneqq (\Gamma,\Gamma).$
\end{enumerate}    

The fact that $C: \cB \to \bbP\cB$ is a $\Gray$-functor is also due to the fact that composites in $\bbP \cB$ are inherited from those in $\cB$.
Furthermore, we have $S \circ C = T \circ C = \id_{\cB}$ by construction.
    
To prove $C$ is a weak equivalence, we must show $C$ is fully faithful and (weakly) essentially surjective. 
First, it is clear that $C$ is faithful at the level of 3-morphisms. 
To see that $C$ is full at this level, consider some $(S\Gamma,T\Gamma): C(\theta) \Rrightarrow C(\sigma)$ in $\bbP \cB$, where $\theta,\sigma: f \Rightarrow g$ and $f,g: a \to b$ in $\cB$.
Then $S\Gamma,T\Gamma: \theta \Rrightarrow \sigma$ in $\cB$ and the axiom for 3-morphisms in $\bbP\cB$ yields
$$S\Gamma = \id_\sigma \circ (\id_{\id_g} \otimes (b \boxtimes S\Gamma)) = ((T\Gamma \boxtimes a) \otimes \id_{\id_f}) \circ \id_\theta = T\Gamma.$$
Thus $\Gamma = C(S\Gamma) = C(T\Gamma)$, so $C$ is full on 3-morphisms.
To see $C$ is essentially surjective on $\bbP \cB(C(f),C(g))$ for $f,g \in \cB$, consider some $\theta: C(f) \Rightarrow C(g)$ in $\bbP \cB$.
We claim that $\theta \cong C(S\theta)$.
Indeed, in this case we may set $T\Gamma := \theta_\alpha^{-1}$ so that $T\Gamma: S\theta \Rrightarrow T\theta$ is a 3-isomorphism in $\cB$.
It is then clear that $(\id_{S\theta},T\Gamma): \theta \Rrightarrow C(S\theta)$ is a 3-isomorphism in $\bbP \cB$ and we conclude $C$ is fully faithful on each $\bbP \cB(C(b),C(b'))$. 
To verify $C$ is essentially surjective on $\bbP \cB(C(b),C(b'))$ for $b,b' \in \cB$, consider some $f: C(b) \to C(b')$ in $\bbP \cB$.
We claim that $f \simeq C(Sf)$.
First we choose an adjoint equivalence 
$$(\Vec{f}: Sf \Rightarrow Tf,\; \Vec{f}^{\,\sqdot}: Tf \Rightarrow Sf,\; \epsilon_{\Vec{f}}: \id_{Tf} \xRrightarrow{\sim} \Vec{f} \otimes \Vec{f}^{\,\sqdot},\; \eta_{\Vec{f}}: \Vec{f}^{\,\sqdot} \otimes \Vec{f} \xRrightarrow{\sim} \id_{Sf}).$$
We define $\theta: f \Rightarrow CSf$ and $\theta^{\sqdot}: CSf \Rightarrow f$ 
by $\theta = (\id_{Sf}, \Vec{f}^{\,\sqdot}, \eta_{\Vec{f}}^{-1})$ and $\theta^{\sqdot} = ( \id_{Sf}, \Vec{f}, \id_{\Vec{f}}).$ We then define the unit and counit $\epsilon_\theta: \id_{CSf} \Rightarrow \theta \otimes \theta^{\sqdot}$ and $\eta_\theta: \theta^{\sqdot} \otimes \theta \Rightarrow \id_f $ by $\epsilon_\theta =  (\id_{\id_{\Vec{f}}}, \eta^{-1}_{\Vec{f}}) $ and $\eta_\theta = (\id_{\id_{\Vec{f}}}, \epsilon^{-1}_{\Vec{f}})$. The fact that $\epsilon_\theta$ and $\eta_\theta$ are 3-isomorphisms in $\bbP \cB$ follows trivially for $\epsilon_\theta$ and from the zig-zag relations of an adjoint equivalence for $\eta_\theta$.
Therefore $C$ is fully faithful. 

We will now verify $C$ is (weakly) essentially surjective on $\cB$. Consider some object $a$ in $\bbP \cB$. 
We claim that $a$ is biequivalent to $CSa = \id_{Sa}$ in $\bbP \cB$.
We first choose a biadjoint biequivalence
$$(\Vec{a}: Sa \to Ta,\; \Vec{a}^{\,\sqdot}: Ta \to Sa,\; \mathbf{a_m},\; \mathbf{a_n},\; a_\Phi,\; a_\Psi),$$
where $\ba_\bm = (a_m,a_m^{\sqdot},\epsilon_{a_m},\eta_{a_m})$ and $\ba_\bn = (a_n,a_n^{\sqdot},\epsilon_{a_n},\eta_{a_n})$ are adjoint equivalences in $\cB(Sa,Sa)$ and in $\cB(Ta,Ta)$ respectively with
\begin{align*}
    a_m: \id_{Sa} \Rightarrow \Vec{a}^{\,\sqdot} \Vec{a} && a_n: \id_{Ta} \Rightarrow \Vec{a} \Vec{a}^{\,\sqdot}\\
    a_m^{\sqdot}: \Vec{a}^{\,\sqdot} \Vec{a} \Rightarrow \id_{Sa} && a_n^{\sqdot}: \Vec{a} \Vec{a}^{\,\sqdot} \Rightarrow \id_{Ta}\\
    \epsilon_{a_m}: \id_{\id_{Sa}} \xRrightarrow{\sim} a_m^{\sqdot} a_m && 
    \epsilon_{a_n}: \id_{\id_{Ta}} \xRrightarrow{\sim} a^{\sqdot}_n a_n \\
    \eta_{a_m}: a_m a_m^{\sqdot} \xRrightarrow{\sim} \id_{\Vec{a}^{\,\sqdot} \Vec{a}} && 
    \eta_{a_n}: a_n a_n^{\sqdot} \xRrightarrow{\sim} \id_{\Vec{a} \Vec{a}^{\,\sqdot}}
\end{align*}
and $a_\Phi,a_\Psi$ are 3-isomorphisms in $\cB$ with
\begin{align*}
a_\Phi&: (a_n^{\sqdot} \boxtimes \Vec{a})(\Vec{a} \boxtimes a_m) \xRrightarrow{\sim} \id_{{\Vec{a}}} & a_\Psi&: (\Vec{a}^{\,\sqdot} \boxtimes a_n^{\sqdot})(a_m \boxtimes \Vec{a}^{\,\sqdot}) \xRrightarrow{\sim} \id_{{\Vec{a}^{\,\sqdot}}}.
\end{align*}
Consider the following biequivalence in $\bbP \cB$.
\begin{itemize}
\item The pair of 1-morphisms  
\begin{tikzcd}
CSa \arrow[r, "f", bend left] & a\phantom{SC} \arrow[l, "g", bend left]
\end{tikzcd}
\hspace{-15pt} where $f \coloneqq (\id_{Sa}, \Vec{a}, \id_{\Vec{a}} )$ and $g \coloneqq  (\id_{Sa}, \Vec{a}^{\,\sqdot},a_m)$.
\item Equivalences $(\mu,\mu^{\sqdot},\epsilon_\mu,\eta_\mu)$ in $\bbP \cB(CSa,CSa)$ and $(\nu,\nu^{\sqdot},\epsilon_\nu,\epsilon_\nu)$ in $\bbP \cB(a,a)$ where
\begin{align*}
\mu&: \id_{CSa} \Rightarrow g \boxtimes f & \mu^{\sqdot}&: g \boxtimes f \Rightarrow \id_{CSa}\\
\nu&: \id_a \Rightarrow f \boxtimes g & \nu^{\sqdot}&: f \boxtimes g \Rightarrow \id_a\\
\epsilon_\mu&: \id_{\id_{CSa}} \xRrightarrow{\sim} \mu^{\sqdot} \otimes \mu & \eta_\mu&: \mu \otimes \mu^{\sqdot} \xRrightarrow{\sim} \id_{g \boxtimes f}\\
\epsilon_\nu&: \id_{\id_a} \xRrightarrow{\sim} \nu^{\sqdot} \otimes \nu & \eta_\nu &: \nu \otimes \nu^{\sqdot} \xRrightarrow{\sim} \id_{f \boxtimes g}
\end{align*}   
are given by
\begin{align*}
    \mu &\coloneqq  (\id_{\id_{Sa}}, a_m, \id_{a_m} ) & \mu^{\sqdot} &\coloneqq  (\id_{\id_{Sa}}, a^{\sqdot}_m, \epsilon_{a_m} )\\
    \nu &\coloneqq  (\id_{\id_{Sa}}, a_n, \nu_\alpha ) & \nu^{\sqdot} &\coloneqq  (\id_{\id_{Sa}}, a^{\sqdot}_n, a^{-1}_{\Phi}  )\\
\epsilon_\mu&\coloneqq  (\id_{\id_{\id_{Sa}}}, \epsilon_{a_m}) & \eta_\mu&\coloneqq  (\id_{\id_{\id_{Sa}}}, \eta_{a_m})\\
\epsilon_\nu&\coloneqq  (\id_{\id_{\id_{Sa}}}, \epsilon_{a_n}) & \eta_\nu &\coloneqq  (\id_{\id_{\id_{Sa}}}, \eta_{a_n})
\end{align*}
Here, the 3-isomorphism
$\nu_{\alpha}: \Vec{a} \boxtimes a_m \xRrightarrow{\sim} a_n \boxtimes \Vec{a}$
is defined as follows:
\begin{align*}
\nu_{\alpha} &\coloneqq  (\id_{a_n \boxtimes \Vec{a}} \otimes a_\Phi) \circ ((\eta_{a_n}^{-1} \boxtimes \Vec{a}) \otimes \id_{\Vec{a} \boxtimes a_m}),
\end{align*}
\end{itemize}   
The fact that $\epsilon_\mu,\eta_\mu,\epsilon_\nu,$ and $\eta_\nu$ are 3-isomorphisms in $\bbP \cB$ follows:
\begin{itemize}
    \item[($\epsilon_\mu$)] immediately,
    \item[($\eta_\mu$)] by the adjoint equivalence $\ba_\bm$'s zig-zag axioms,
    \item[($\epsilon_\nu$)] by the adjoint equivalence $\ba_\bn$'s zig-zag axioms,
    \item[($\eta_\nu$)] immediately.
\end{itemize}
Therefore $C$ is essentially surjective and we conclude $C: \cB \to \bbP \cB$ is a weak equivalence. The fact that $\binom{S}{T}: \cB^I \to \cB \times \cB$ is a fibration follows from a simple characterization of isomorphisms, equivalences, and bi-equivalences in $\cB^I$. As we have not provided much treatment for fibrations in Lack's model structure, we refer the interested reader to \cite{lack_2011} for more details.

\end{construction}

\begin{remark}
It is easy to show \hyperref[fact:BI]{(P2)} from our previous construction. Indeed, for a $\Gray$-functor $F: \cB_1 \to \cB_2$, notice there is an organic $\Gray$-functor $\bbP F: \bbP \cB_1 \to \bbP \cB_2$ which is obtained by passing the data of a $k$-morphism $(0 \leq k \leq 3$) in $\bbP \cB_1$ through $F.$
\end{remark}
\begin{remark}\label{rem:data}
When $F,G: \cA \to \cB$ are pseudo-naturally equivalent $\Gray$-functors, there exists a tritransformation $\alpha: F \to G$ consisting of:
\begin{itemize}
    \item[($\alpha0$)] for each object $a \in \cA$, a biequivalence $\alpha_a: F a \to G a$ and a 3-isomorphism $M_a: \alpha_{\id_a} \Rrightarrow \id_{\alpha_a}$ in $\cB$, where
    \item[($\alpha1$)] for each 1-morphism $f \in \cA$ these is an equivalence $\alpha_f: \alpha_b \boxtimes F f \Rightarrow G f \boxtimes \alpha_a$  in $\cB$ and for each composable pair $f,g$ of 1-morphisms, a 3-isomorphism $\Pi_{gf}: (G g \boxtimes \alpha_f)(\alpha_g \boxtimes F f) \Rrightarrow \alpha_{g \boxtimes f}$ in $\cB$, and
    \item[($\alpha2$)] for each $2$-morphism $\theta: f \Rightarrow g$ with $f,g: a \to b$, a 3-isomorphism $\alpha_\theta: \alpha_g \otimes (\alpha_b \boxtimes F \theta) \Rrightarrow (G \theta \boxtimes \alpha_a) \otimes \alpha_f$ in $\cB$.
\end{itemize}
This data is subject to various axioms, including naturality conditions for $\alpha$, $M$, and $\Pi$; an associativity condition $\Pi$; and left and right unitality conditions relating $M$ and $\Pi$. We refer the interested reader to \cite[Def.~4.16]{gurski_2013} for more details. We now prove \hyperref[fact:BI]{(P3)}.
\end{remark}
\begin{construction} Suppose $F,G: \cA \to \cB$ are pseudo-naturally equivalent $\Gray$-functors, so that we have data as in Remark \ref{rem:data}. We define the weak 3-functor $\langle F, G \rangle: \cA \to \bbP \cB$ as follows.
    \begin{enumerate}
        \item[(0)] For objects $a \in \cA$, we set
        $\langle F,G \rangle(a) \coloneqq (F a \xrightarrow{\alpha_a}  G a),$
        \item[(1)] For 1-morphisms $f \in \cA$, we set
        $\langle F, G \rangle(f) \coloneqq (F f,F g, \alpha_f),$ 
        \item[(2)] For 2-morphisms $\theta \in \cA$, we set
        $\langle F, G \rangle (\theta) \coloneqq (F \theta,G \theta, \alpha_\theta),$
        \item[(3)] For 3-morphism $\Gamma \in \cA$, we set
        $\langle F, G \rangle (\Gamma) \coloneqq (F \Gamma, G \Gamma).$
    \end{enumerate}
    The fact that $\langle F,G \rangle(\Gamma)$ is a 3-morphism in $\bbP \cB$ is by the naturality axiom $\alpha$ satisfies. We now equip this map with the structure of a weak 3-functor. 
    \begin{itemize}
        \item[($\chi$)] For $a \xrightarrow{f} b \xrightarrow{g} c$ in $\cA$, we define an adjoint equivalence $(\chi_{gf},\chi_{gf}^{\sqdot},\epsilon_{\chi_{gf}},\eta_{\chi_{gf}})$ where 
        $$\chi_{gf}: \langle F, G \rangle(g) \boxtimes \langle F,G \rangle (f) \Rrightarrow \langle F, G \rangle ( g \boxtimes f),$$
        by $\chi_{gf} \coloneqq (\id,\id, \Pi_{gf})$, $\chi_{gf}^{\sqdot} \coloneqq (\id,\id, \Pi_{gf}^{-1})$, and $\epsilon_{\chi_{gf}} = \eta_{\chi_{gf}} \coloneqq (\id,\id)$.
         Notice $\epsilon_{\chi_{gf}}$ and $\eta_{\chi_{gf}}$ immediately satisfy the axioms for 3-morphisms in $\bbP \cB$. 
        
        For \!\begin{tikzcd}
        a \arrow[r, "f", bend left,""{name=U,inner sep=1pt,below}] \arrow[r, "f'"', bend right,""{name=D,inner sep=1pt}] & b \arrow[r, "g", bend left,""{name=Uprime,inner sep=1pt,below}] \arrow[r, "g'"', bend right,""{name=Dprime,inner sep=1pt}] & c
        \arrow[Rightarrow, from=U, to=D, "\,\theta"]
        \arrow[Rightarrow, from=Uprime, to=Dprime, "\,\sigma"]
        \end{tikzcd}
        in $\cA$, we define the invertible 3-morphisms
        \vspace{-10pt}
        \begin{center}
        \small
        \begin{tikzcd}
        {\langle F, G\rangle(g) \boxtimes \langle F, G\rangle(f)} \arrow[r, "\chi_{gf}",Rightarrow] \arrow[d, "{\langle F, G\rangle(\sigma)\boxtimes \langle F, G\rangle(\theta)}"',Rightarrow] & {\langle F, G\rangle(g \boxtimes f)} \arrow[d, "{\langle F, G\rangle(\sigma \boxtimes \theta)}",Rightarrow] \\
        {\langle F, G\rangle(g') \boxtimes \langle F, G\rangle(f')} \arrow[r, "\chi_{g'f'}"',Rightarrow] \arrow[ru, "\Uuparrow\chi_{\sigma\theta}" description, phantom]                        & {\langle F, G\rangle(g' \boxtimes f')}                                                            
        \end{tikzcd}
        \hspace{10pt}\begin{tikzcd}
        {\langle F, G\rangle(g \boxtimes f)} \arrow[r, "\chi_{gf}^{\sqdot}",Rightarrow] \arrow[d, "{\langle F, G\rangle(\sigma \boxtimes \theta)}"',Rightarrow] & {\langle F, G\rangle(g) \boxtimes \langle F, G\rangle(f)} \arrow[d, "{\langle F, G\rangle(\sigma)\boxtimes \langle F, G\rangle(\theta)}",Rightarrow] \\
        {\langle F, G\rangle(g' \boxtimes f')} \arrow[r, "\chi_{g'f'}^{\sqdot}"',Rightarrow] \arrow[ru, "\Uuparrow\chi_{\sigma\theta}^{\sqdot}" description, phantom] & {\langle F, G\rangle(g') \boxtimes \langle F, G\rangle(f')}                                                                                  
        \end{tikzcd}
        \end{center}
simply by $\chi_{\sigma\theta} = \chi_{\sigma\theta}^{\sqdot} \coloneqq (\id,\id).$ The fact that $\chi_{\sigma\theta}$ and $\chi_{\sigma\theta}^{\sqdot}$ satisfy the axioms for 3-morphisms in $\bbP \cB$ follows from the naturality of $\Pi$.

        \item[($\iota$)] For each object $a \in \cA$, we define an adjoint equivalence $(\iota_a,\iota_a^{\sqdot},\epsilon_{\iota_a},\eta_{\iota_a})$ where
        $$\iota_a: \id_{\langle  F, G \rangle(a)} \Rightarrow \langle  F, G \rangle (\id_a),$$
        by $\iota_a \coloneqq (\id,\id,M_a^{-1})$, $\iota_a^{\sqdot} \coloneqq (\id,\id,M_a)$, and $\epsilon_{\iota_a} = \eta_{\iota_a} \coloneqq (\id,\id)$. Notice $\epsilon_{\iota_a}$ and $\eta_{\iota_a}$ immediately satisfy the axioms for 3-morphisms in $\bbP \cB$. We must also choose invertible 3-morphisms
\begin{align*}
\iota_{\id_a}&: \iota_a \otimes \id_{\id_{\langle F, G\rangle(a)}} \xRrightarrow{\sim} \langle F, G\rangle(\id_{\id_a}) \otimes \iota_a\\
\iota_{\id_a}^{\sqdot}&: \iota^{\sqdot}_a \otimes \langle F, G\rangle(\id_{\id_a}) \xRrightarrow{\sim} \id_{\id_{\langle F, G\rangle a}} \otimes \iota^{\sqdot}_a
\end{align*}
which we will simply take to be $\iota_{\id_a} \coloneqq (\id,\id)$ and $\iota^{\sqdot}_{\id_a} \coloneqq (\id,\id)$. The fact that $\iota_{\id_a}$ and $\iota^{\sqdot}_{\id_a}$ are 3-isomorphisms follows from the naturality condition $M$ satisfies.

        \item[($\omega$)] For $a \xrightarrow{f} b \xrightarrow{g} c \xrightarrow{h} d$ in $\cA$, we define the invertible 3-morphism
\begin{center}
\small
\begin{tikzcd}[column sep = 60pt]
{\langle  F,  G \rangle(h) \boxtimes \langle  F,  G \rangle(g) \boxtimes \langle  F,  G \rangle(f)} \arrow[r, "{\chi_{hg}\boxtimes \langle  F,  G \rangle(f)}",Rightarrow] \arrow[d, "{\langle  F,  G \rangle(h) \boxtimes \chi_{gf}}"',Rightarrow] & {\langle  F,  G \rangle(h \boxtimes g) \boxtimes \langle  F,  G \rangle(f)} \arrow[d, "{\chi_{hg,f}}",Rightarrow] \arrow[ld, "\Ddownarrow\omega_{hgf}" description, phantom] \\
{\langle  F,  G \rangle(h) \boxtimes \langle  F,  G \rangle(g \boxtimes f)} \arrow[r, "{\chi_{h,gf}}"',Rightarrow]                                                                                                                             & {\langle  F,  G \rangle(h\boxtimes g \boxtimes f)}                                                                                                         
\end{tikzcd}
\end{center}

simply by $\omega_{hgf} = (\id,\id)$. In this case, the axiom for 3-morphisms in $\bbP \cB$ translates to the associativity of $\Pi$.
        \item[($\gamma$)] For $a \xrightarrow{f} b$ in $\cA$, we define the invertible 3-morphism
        \begin{center}
\small
\begin{tikzcd}
{\id_{\langle F, G\rangle(b)} \boxtimes \langle F, G\rangle(f)} \arrow[rr, "{\iota_b \boxtimes\langle F ,  G \rangle (f)}",Rightarrow] \arrow[rd, no head] \arrow[rd, no head, shift right] & {} \arrow[d, "\Ddownarrow \gamma_f", phantom] & {\langle F, G\rangle(\id_b) \boxtimes \langle F, G\rangle(f)} \arrow[ld, "{\chi_{\id_b,f}}",Rightarrow] \\
                                                                                                                                                             & {\langle F, G\rangle(f)}                       &                                                                                                 
\end{tikzcd}
        \end{center}
        simply by $\gamma_f = (\id,\id)$.
        In this case, the axiom for 3-morphisms in $\bbP \cB$ translates to the left unitality axiom relating $\Pi$ and $M$.
        \item[($\delta$)] For $a \xrightarrow{f} b$ in $\cA$, we define the invertible 3-morphism $\delta_f: \id_{\langle  F, G \rangle(f)} \xRrightarrow{\sim} \chi_{f,\id_a}$
        \begin{center}
\small
\begin{tikzcd}
{\langle F, G\rangle(f) \boxtimes \id_{\langle F, G\rangle(a)}} \arrow[rd, no head] \arrow[rd, no head, shift right] \arrow[rr, "{{\langle F, G\rangle(f) \boxtimes\, \iota_a}}", Rightarrow] & {} \arrow[d, "\Uuparrow \delta_f", phantom] & {\langle F, G\rangle(f) \boxtimes \langle F, G\rangle(\id_a)} \arrow[ld, "{\chi_{f,\id_a}}", Rightarrow] \\
& {\langle F, G\rangle(f)}                  &                                                                                                            
\end{tikzcd}
        \end{center}
        by $\delta_f = (\id,\id)$. In this case, the axiom for 3-morphisms in $\bbP \cB$ translates to the right unitality axiom relating $\Pi$ and $M$.
    \end{itemize}
These collections of morphisms in $\bbP \cB$ assemble themselves into pseudonatural transformations or modifications due to the naturality conditions $\Pi$ and $M$ satisfy. 
Furthermore, these pseudonatural transformations and modifications trivially satisfy the two axioms of weak 3-functors. Thus $\langle  F, G \rangle$ is a weak 3-functor and it is clear that $S \circ \langle  F,  G \rangle =  F$ and $T \circ \langle  F,  G \rangle =  G$ by construction. 

\end{construction}

\bibliographystyle{amsalpha}
{\footnotesize{
\bibliography{refs}
}}
\end{document}